\newtheorem{thm}{Theorem}[section]
\newtheorem{cor}[thm]{Corollary}
\newtheorem{prop}[thm]{Proposition}
\theoremstyle{definition}
\newtheorem{defn}[thm]{Definition}
\theoremstyle{remark}
\newtheorem{rem}[thm]{Remark}        
\numberwithin{equation}{section}
\newcommand{\Pro}{\mathcal{P}}
\newcommand{\R}{\mathbb{R}}
\newcommand{\m}{\tilde{m}}
\newcommand{\Ob}{\mathrm{ObsDiam}}
\DeclareMathOperator{\Diam}{Diam}
\def\@makefnmark{%
\leavevmode
\raise.9ex\hbox{\check@mathfonts
\fontsize\sf@size\z@\normalfont%
\@thefnmark}%
}
\title{Coarse Ricci curvature on the space of probability measures}
\author{Yu Kitabeppu}
\thanks{Partly supported by the Grant-in-Aid for JSPS Fellows, The Ministry of Education, Culture, Sports, Science and Technology, Japan}
\begin{document}
\maketitle
 \begin{abstract}
  In this paper we study the coarse Ricci curvature on the space of probability measures on 
  a metric space. 
  We consider the $p$-coarse Ricci curvature for $p\geq 1$, 
  which is a slight generalization of the coarse Ricci curvature 
  defined by Ollivier. 
  We get a natural random walk on the $L^p$-Wasserstein space 
  if the underlying space has a random walk. 
  The infimum of the $p$-coarse Ricci curvature on the $L^p$-Wasserstein space coincides with 
  that with respect to the original random walk. 
  Considering a random walk as a map, we investigate the relation between 
  Gromov-Hausdorff convergence and the $p$-coarse Ricci curvature.  
  We also study the concentration of measure phenomenon 
  related to the coarse Ricci curvature. 
   \end{abstract}
\section{Introduction}
  Ollivier defined a notion of the coarse Ricci curvature on 
  a metric space with a random walk in \cite{O}. 
  In this paper we study a notion of the $p$-coarse Ricci curvature 
  and investigate it on the space of probability measures. 
  We call a metric space $(X,d)$ a \emph{Polish metric space} 
  if it is a complete separable metric space.  
  Let $(X,d,\{m_x\}_{x\in X})$ be a Polish metric space with a \emph{random walk}, 
  where a random walk 
  is a family of Borel probability measures parametrized by $x\in X$.  
  Suppose that $m_x\in\Pro_p(X)$ for any $x\in X$, $p\geq 1$, where $\Pro_p(X)$ is 
  the \emph{$L^p$-Wasserstein space} (see Definition \ref{LpWp}). 
  \begin{defn}\label{pcRic}
 Let $(X,d,\{m_x\}_{x\in X})$ be a Polish metric space with a random walk and $1\leq p\leq \infty$. 
 We define the \emph{$p$-coarse Ricci curvature} along $xy$ by 
 \begin{align}
  \kappa_p(x,y):=1-\frac{W_p(m_x,m_y)}{d(x,y)}\label{cRicdef}
 \end{align} 
 for distinct points $x,y\in X$, where $W_p$ is the $L^p$-Wasserstein metric 
 (see Definition \ref{LpWp}). 
\end{defn}
  Considering the random walk as a map from $X$ to $\Pro_p(X)$, 
  we define a map 
  $\m : \Pro_p(X)\ni\mu\mapsto \m_{\mu}\in\Pro(\Pro_p(X))$ as 
  \begin{align}
   \int_{\Pro_p(X)}f(\sigma)\, \m_{\mu}(d\sigma):=\int_X f(m_x)\, \mu(dx)\label{ext}
  \end{align}
  for any $f\in\mathcal{C}_b(\Pro_p(X))$, where $\mathcal{C}_{b}(\Pro_p(X))$ is the set of all bounded 
  continuous functions on $\Pro_p(X)$. 
  We prove the following theorem. 
  \begin{thm}\label{lipext}
   Let $\left(X,d,\{m_x\}_{x\in X}\right)$ be a metric space with a random walk. 
   Let $\kappa_p^{m},\,1\leq p<\infty$, be the $p$-coarse Ricci curvature 
   with respect to a random walk $\{m_x\}_{x\in X}$. 
   Then we have 
   \begin{align}
    \inf_{x,y\in X}\kappa_p^{m}(x,y)=\inf_{\mu,\nu\in\Pro_p(X)}\kappa_p^{\tilde{m}}(\mu,\nu).
    \label{lipexteq}
   \end{align} 
  \end{thm}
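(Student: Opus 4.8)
The plan is to rephrase \eqref{lipexteq} as an equality of optimal Lipschitz constants and then apply the standard device of pushing an optimal transport plan forward through a Lipschitz map. Introduce the two maps $\Phi\colon(X,d)\to(\Pro_p(X),W_p)$, $\Phi(x):=m_x$, and $\m\colon(\Pro_p(X),W_p)\to(\Pro(\Pro_p(X)),W_p)$ from \eqref{ext}. By Definition \ref{pcRic},
\[
\inf_{x\ne y}\kappa_p^{m}(x,y)=1-L,\qquad L:=\sup_{x\ne y}\frac{W_p(m_x,m_y)}{d(x,y)},
\]
and likewise $\inf_{\mu\ne\nu}\kappa_p^{\m}(\mu,\nu)=1-\tilde L$, where $\tilde L$ is the optimal Lipschitz constant of $\m$. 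So it suffices to prove $L=\tilde L$.

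First I would test the weak identity \eqref{ext} against $f\in\mathcal{C}_b(\Pro_p(X))$ to obtain the pointwise identities $\m_{\delta_x}=\delta_{m_x}$ and, more generally, $\m_\mu=\Phi_\#\mu$, using that bounded continuous functions separate Borel probability measures on $\Pro_p(X)$. Since $x\mapsto\delta_x$ is an isometric embedding of $(X,d)$ into $(\Pro_p(X),W_p)$ and $W_p(\delta_{m_x},\delta_{m_y})=W_p(m_x,m_y)$, the competitor pairs $(\delta_x,\delta_y)$ already show that the supremum defining $\tilde L$ dominates the one defining $L$, i.e.\ $\tilde L\ge L$.

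For the reverse inequality $\tilde L\le L$ we may assume $L<\infty$ (otherwise there is nothing to prove). Then $\Phi$ is $L$-Lipschitz, hence Borel, and $\m_\mu=\Phi_\#\mu$ satisfies $\int W_p(m_x,m_{x_0})^p\,\mu(dx)\le L^p\int d(x,x_0)^p\,\mu(dx)<\infty$, so $\m_\mu\in\Pro_p(\Pro_p(X))$ and $W_p(\m_\mu,\m_\nu)$ is well defined. Fix $\mu,\nu\in\Pro_p(X)$ and a coupling $\pi$ of $\mu$ and $\nu$ realizing $W_p(\mu,\nu)$ (or, to avoid invoking existence of optimal plans, an $\varepsilon$-optimal one). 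The push-forward $\tilde\pi:=(\Phi\times\Phi)_\#\pi$ has marginals $\Phi_\#\mu=\m_\mu$ and $\Phi_\#\nu=\m_\nu$, hence is admissible in the Kantorovich problem on $\Pro_p(X)$, and
\begin{align}
W_p(\m_\mu,\m_\nu)^p
&\le\int_{\Pro_p(X)\times\Pro_p(X)}W_p(\sigma,\tau)^p\,\tilde\pi(d\sigma\,d\tau)
=\int_{X\times X}W_p(m_x,m_y)^p\,\pi(dx\,dy)\notag\\
&\le L^p\int_{X\times X}d(x,y)^p\,\pi(dx\,dy)=L^p\,W_p(\mu,\nu)^p.\notag
\end{align}
Thus $W_p(\m_\mu,\m_\nu)\le L\,W_p(\mu,\nu)$ for all $\mu,\nu$, i.e.\ $\tilde L\le L$; combined with $\tilde L\ge L$ this gives $L=\tilde L$, which is \eqref{lipexteq}.

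I expect the only genuinely delicate points to be the measurability of $\Phi$ (handled by reducing to the case $L<\infty$, where $\Phi$ is continuous) and the passage from the weak definition \eqref{ext} to the pointwise identities $\m_{\delta_x}=\delta_{m_x}$ and $\m_\mu=\Phi_\#\mu$; everything else is the routine "push the plan forward through a Lipschitz map" estimate, the main care being to keep track of the three Wasserstein metrics in play — on $X$, on $\Pro_p(X)$, and on $\Pro_p(\Pro_p(X))$ — and to verify the marginals of $\tilde\pi$.
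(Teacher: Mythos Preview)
Your proof is correct and follows the same overall architecture as the paper's: rewrite \eqref{lipexteq} as an equality of Lipschitz constants, use Dirac masses $(\delta_x,\delta_y)$ for the inequality $\tilde L\ge L$, and prove $\tilde L\le L$ when $L<\infty$. The difference lies in how the last inequality is obtained. The paper works on the dual side: it takes a pair $(\phi,\psi)\in\Phi_{W_p^p}(\m_\mu,\m_\nu)$, uses that $\m_\mu$ is supported on $m(X)$ (invoking universal measurability of continuous images) to deduce $\phi(m_x)+\psi(m_y)\le C^p d(x,y)^p$, and then reads off $(\phi\circ m/C^p,\psi\circ m/C^p)\in\Phi_{d^p}(\mu,\nu)$ via Kantorovich duality. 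You instead work on the primal side: having identified $\m_\mu=\Phi_\#\mu$, you push an (almost) optimal coupling $\pi$ of $(\mu,\nu)$ forward by $\Phi\times\Phi$ and estimate the cost directly. Your route is slightly more elementary in that it avoids the duality theorem and the measurability citation altogether, while the paper's dual argument makes the role of the potential pairs explicit; both yield exactly the same bound $W_p(\m_\mu,\m_\nu)\le L\,W_p(\mu,\nu)$.
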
  
  By Theorem \ref{lipext}, we have a contrasting 
  difference between the curvature-dimension condition (\cite{LV,St1,St2}) and 
  the $p$-coarse Ricci curvature. 
  In fact, Chodosh \cite{C} proved that the $L^2$-Wasserstein space over 
  a unit interval with an entropic measure 
  \emph{never} satisfy 
  the curvature-dimension condition $CD(K,\infty)$ for any $K\in\R$. 
  
  We observe what happens on the $L^p$-Wasserstein space provided that  
  a contraction semigroup converges to an invariant distribution. 
  We show that the convergence of the contraction semigroup to 
  a unique invariant distribution leads to 
  the convergence of the flow to the Dirac measure of the invariant distribution 
  and 
  the convergent rate coincides (see Remark \ref{convrate}). 
  
  It is important to investigate the relation between a version of a notion of Ricci curvature 
  and the Gromov-Hausdorff convergence. 
  We prove the stability of 
  the $p$-coarse Ricci curvature with respect to 
  the Gromov-Hausdorff convergence in the following sense. 
  \begin{thm}\label{GHtop}
   Let $\{(X_n,\ast_n,d_n,\{m^n_x\}_{x\in X_n})\}_{n\in \mathbb{N}}$ 
   be a sequence of locally compact, geodesic, Polish pointed metric spaces with random walks 
   such that $(X_n,\ast_n,d_n)\rightarrow (X,\ast,d)$ in the Gromov-Hausdorff sense. 
   Suppose that the following three conditions $(1)$, $(2)$ and $(3)$ are satisfied: 
   \begin{enumerate}
   \item There exists a constant $\kappa_0$ such that 
   $\inf_n\inf_{x,y\in X_n}\kappa^{m^n}_p(x,y)\geq\kappa_0$ holds; 
   \item For any sequence $\{x_n\in X_n\}_{n\in\mathbb{N}}$ with 
   $x_n\rightarrow x\in X$ (see Definition \ref{ptGH} below), and for any positive number 
   $\epsilon >0$, there exist compact sets $K^n_{\epsilon}\subset X_n$ with $x_n\in K^n_{\epsilon}$ 
   such that $m^n_{x_n}(X_n\setminus K^n_{\epsilon})\leq \epsilon$ for any $n\in\mathbb{N}$ and 
   $\sup_{n} Diam(K^n_{\epsilon})<\infty$ ;
   \item For any sequence $\{x_n\in X_n\}_{n\in\mathbb{N}}$ with 
   $x_n\rightarrow x\in X$, the uniform boundedness condition for $m^n_{x_n}$, 
   \begin{align}
    \sup_{n\in\mathbb{N}}\int_{X_n}d(\ast_n,y)^p\,m^n_{x_n}(dy)<\infty,\notag
   \end{align}
   is satisfied.
   \end{enumerate} 
   Then, there exist a subsequence $\{(X_{n_k},d_{n_k}, \{m^{n_k}_x\}_{x\in X_{n_k}})\}_k$ and 
   a random walk $\{m_x\}_{x\in X}$ on $X$ such that 
   $\{m^{n_k}_x\}_{x\in X_{n_k}}$ converges to $\{m_x\}_{x\in X}$ as a map 
   (see Definition \ref{convasmap}). 
   In particular, the $p$-coarse Ricci curvature with respect to $\{m_x\}_{x\in X}$ 
   satisfies 
   \begin{align}
    \inf_{x,y\in X}\kappa^m_p(x,y)\geq \kappa_0. \notag
   \end{align} 
   \end{thm}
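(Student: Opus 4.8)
The plan is to realise the Gromov--Hausdorff convergence inside a single ambient space, to extract a limiting random walk by a compactness argument over a countable dense set, and finally to extend it using the uniform Lipschitz property that Condition $(1)$ encodes. First I would reformulate $(1)$: for every $n$ and all distinct $x,y\in X_n$, the inequality $\kappa^{m^n}_p(x,y)\ge\kappa_0$ is, by Definition \ref{pcRic}, equivalent to $W_p(m^n_x,m^n_y)\le(1-\kappa_0)\,d_n(x,y)$, where $\kappa_0\le1$ automatically since $W_p\ge0$. Thus each $m^n\colon X_n\to\Pro_p(X_n)$ is $(1-\kappa_0)$-Lipschitz, uniformly in $n$. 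Each $X_n$ is complete, locally compact and geodesic, hence proper by Hopf--Rinow, and so is the limit $X$; I would therefore fix a proper Polish space $(Z,d_Z)$ with isometric embeddings $X_n\hookrightarrow Z$ and $X\hookrightarrow Z$ identifying base points and realising $X_n\to X$ in the pointed Hausdorff sense inside $Z$. Since isometric embeddings preserve Wasserstein distances, I regard each $m^n_x$ as an element of $\Pro_p(Z)$ and retain the inequality above with $W_p=W_p^Z$.

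Next, fix a countable dense set $D=\{x^{(i)}\}_{i\in\mathbb N}\subset X$ and, for each $i$, points $x^{(i)}_n\in X_n$ with $x^{(i)}_n\to x^{(i)}$. For a fixed $i$, Condition $(2)$ places all but $\epsilon$ of the mass of $m^n_{x^{(i)}_n}$ in $\overline B_Z(x^{(i)}_n,C_\epsilon)$ with $C_\epsilon$ independent of $n$; as $x^{(i)}_n$ stays in a bounded region and $Z$ is proper, $\{m^n_{x^{(i)}_n}\}_n$ is tight, while Condition $(3)$ bounds its $p$-th moments. By Prokhorov's theorem a subsequence converges weakly, and the $p$-moment control from $(3)$ promotes this to convergence in the sense required by Definition \ref{convasmap}; call the limit $\nu^{(i)}$. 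A standard tightness/Portmanteau argument using $X_n\to X$ forces $\mathrm{supp}\,\nu^{(i)}\subset X$, so $\nu^{(i)}\in\Pro_p(X)$. A diagonal extraction over $i$ produces one subsequence $\{n_k\}$ along which $m^{n_k}_{x^{(i)}_{n_k}}\to\nu^{(i)}$ for every $i$.

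Now I would build $\{m_x\}_{x\in X}$. Put $m_{x^{(i)}}:=\nu^{(i)}$ for $x^{(i)}\in D$. For a general $x\in X$ choose $x^{(i_j)}\to x$ in $D$; since $m^{n_k}_{x^{(i_j)}_{n_k}}\to\nu^{(i_j)}$, $m^{n_k}_{x^{(i_l)}_{n_k}}\to\nu^{(i_l)}$ and $d_{n_k}(x^{(i_j)}_{n_k},x^{(i_l)}_{n_k})\to d(x^{(i_j)},x^{(i_l)})$, lower semicontinuity of $W_p$ and the reformulated $(1)$ give $W_p(\nu^{(i_j)},\nu^{(i_l)})\le(1-\kappa_0)\,d(x^{(i_j)},x^{(i_l)})$, so $\{\nu^{(i_j)}\}_j$ is Cauchy in the complete space $\Pro_p(X)$; its limit $m_x$ is independent of the chosen sequence (same estimate) and agrees with $\nu^{(i)}$ on $D$. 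The resulting map $x\mapsto m_x$ is $(1-\kappa_0)$-Lipschitz into $\Pro_p(X)$, in particular Borel, so $\{m_x\}_{x\in X}$ is a random walk, and its Lipschitz bound is exactly
\[
 \kappa^m_p(x,y)=1-\frac{W_p(m_x,m_y)}{d(x,y)}\ge\kappa_0\qquad(x\neq y),
\]
which is the last assertion. For the convergence of $\{m^{n_k}_x\}$ to $\{m_x\}$ as a map, given $x_{n_k}\to x$ and $\epsilon>0$ I would pick $x^{(i)}\in D$ with $d(x,x^{(i)})<\epsilon$ and bound $W_p(m^{n_k}_{x_{n_k}},m_x)$ by $W_p(m^{n_k}_{x_{n_k}},m^{n_k}_{x^{(i)}_{n_k}})+W_p(m^{n_k}_{x^{(i)}_{n_k}},\nu^{(i)})+W_p(\nu^{(i)},m_x)$; the first and third terms are at most $(1-\kappa_0)$ times quantities converging to $d(x,x^{(i)})<\epsilon$, and the middle term tends to $0$, so $\limsup_k W_p(m^{n_k}_{x_{n_k}},m_x)\le2(1-\kappa_0)\epsilon$; letting $\epsilon\to0$ gives the claim.

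The main obstacle is the compactness step: realising the convergence in one space with Wasserstein distances intact and, above all, proving that $\{m^n_{x^{(i)}_n}\}_n$ is precompact in the topology of Definition \ref{convasmap} and that its limits are carried by $X$. Conditions $(2)$ and $(3)$ are designed to supply precisely the tightness and the $p$-moment control this requires; once that is secured, the diagonal argument, the Lipschitz extension, and the curvature inequality (which uses only the uniform constant $1-\kappa_0$ and lower semicontinuity of $W_p$) are routine.
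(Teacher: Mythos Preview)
Your strategy---diagonal extraction over a countable dense set followed by $(1-\kappa_0)$-Lipschitz extension---is exactly the paper's, and your realization inside a single ambient proper space $Z$ is a clean alternative to the paper's use of $\epsilon_n$-approximation maps between balls. The lower-semicontinuity argument for $W_p(\nu^{(i)},\nu^{(j)})\le(1-\kappa_0)d(x^{(i)},x^{(j)})$ and the extension step are correct, so the final curvature inequality $\inf\kappa^m_p\ge\kappa_0$ goes through as you wrote it.

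There is, however, a real gap in the step where you say that ``the $p$-moment control from $(3)$ promotes'' weak convergence of $\{m^{n_k}_{x^{(i)}_{n_k}}\}$ to the convergence you need for the ``as a map'' claim. Uniform boundedness of $p$-th moments together with the tightness from $(2)$ does \emph{not} yield $W_p$-convergence: take $X_n=\R$, $\ast_n=0$, $m^n_x=(1-1/n)\delta_x+(1/n)\delta_{x+n}$; then $(1)$ holds with $\kappa_0=0$, $(2)$ holds with $\mathrm{Diam}(K^n_\epsilon)\le 1/\epsilon$, and $\int|y|\,dm^n_0=1$ so $(3)$ holds, yet $m^n_0\to\delta_0$ weakly while $W_1(m^n_0,\delta_0)=1$. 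Thus your middle term $W_p(m^{n_k}_{x^{(i)}_{n_k}},\nu^{(i)})$ need not go to zero. What is missing is $p$-uniform integrability (condition~(4) of Theorem~\ref{equiv}), which $(2)$ and $(3)$ alone do not supply.

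The paper does not work directly with $m^n_{x_n}$ but first truncates it to $\mu^n_j:=\chi_{B_{R_j}(\ast_n)}m^n_{x_n}/m^n_{x_n}(B_{R_j}(\ast_n))$, pushes these forward, and extracts $W_p$-limits $\mu_j(x)$; since each $\mu^n_j$ has support in a fixed bounded (hence compact) ball, weak convergence of the pushforwards automatically upgrades to $W_p$-convergence. Only afterwards is $j\to\infty$ taken, using a second tightness argument to get $\mu_x$. This two-scale (truncate, then let the radius grow) device is precisely what your direct approach lacks. Once you insert such a truncation---or otherwise establish $p$-uniform integrability---your argument closes; the curvature bound itself is unaffected because it relies only on the lower semicontinuity of $W_p$ under weak convergence.
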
 
   \begin{rem}
    The condition $(2)$ in Theorem \ref{GHtop} is necessary. 
    For example, let $(X_n,\ast_n)=(\R,0)$. 
    We take the random walks $\{m^n_x\}_{x\in \R},$ $n=1,2,\ldots$, on $\R$ 
    of which each measure $m^n_x$ 
    is the uniform measure on the closed interval $[x+n,x+n+1]$. 
    Although the $p$-coarse Ricci curvature is always $0$ for $(\R,\{m^n_x\}_{x\in\R})$, 
    the sequence of random walks $\{m^n_x\}_{x\in\R},\;n=1,2\ldots$, does not converge.   
   \end{rem}
  Ollivier introduced a notion of the Gromov-Hausdorff convergence with random walks and 
  proved the stability of a lower bound of the coarse Ricci curvature with respect to that convergence 
   (see \cite{O}*{Definition 55 and Proposition 56}). 
  There he assumed the existence of the limit random walk on the 
  limit space. Our result claims the existence of the random walk and gives another proof 
  of the existence of a lower bound of the coarse Ricci curvature on the limit space. 
  Ollivier's result and Theorem \ref{GHtop} are equivalent to each other 
  if all of metric spaces in the sequence and the limit space are compact. 
  See Proposition \ref{OGH} for more precise information. 
  
  We also show a relation between 
  the concentration of measure phenomenon and a lower bound of a coarse Ricci curvature. 
  The concentration of measure phenomenon is deeply related to a lower bound of 
  the Ricci curvature bound in a Riemannian manifold \cite{FS,GM}. 
  \begin{thm}\label{Levy}
   Let $\{(X_n,d_n,\nu_n)\}_n$ be a L{\'e}vy family (see Definition \ref{levydef}). 
   Suppose that there exists random walks 
   $m_n : X_n\rightarrow\Pro_1(X_n)$ with a uniform lower bound of the 1-coarse Ricci curvature. 
   Then $\{(\Pro_1(X_n),W_1,(\tilde{m}_n)_{\nu_n})\}_n$ is also a L{\'e}vy family. 
  \end{thm}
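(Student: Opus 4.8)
The plan is to reduce Theorem \ref{Levy} to the soft fact that a uniformly Lipschitz image of a L\'evy family is again a L\'evy family. First I would unwind the definition \eqref{ext} at $\mu=\nu_n$: the measure $(\m_n)_{\nu_n}\in\Pro(\Pro_1(X_n))$ is exactly the push-forward $(m_n)_\ast\nu_n$ of $\nu_n$ under the random-walk map $m_n\colon X_n\ni x\mapsto m^n_x\in\Pro_1(X_n)$ (here $\Pro_1(X_n)$ is again Polish, so everything is well posed). Hence $(\Pro_1(X_n),W_1,(\m_n)_{\nu_n})$ is the image metric measure space of $(X_n,d_n,\nu_n)$ under $m_n$, and it is enough to control the concentration function of such an image.

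The second step is to record that the curvature hypothesis is precisely a uniform Lipschitz bound on these maps. By Definition \ref{pcRic} and the assumption $\inf_n\inf_{x\neq y}\kappa_1^{m^n}(x,y)\geq\kappa_0$, for all $n$ and all distinct $x,y\in X_n$ we have
\begin{align}
 W_1(m^n_x,m^n_y)=\bigl(1-\kappa_1^{m^n}(x,y)\bigr)\,d_n(x,y)\leq(1-\kappa_0)\,d_n(x,y),\notag
\end{align}
and since $W_1\geq 0$ forces $1-\kappa_0\geq 0$, the constant $L:=\max\{1,\,1-\kappa_0\}$ is finite and strictly positive and every $m_n$ is $L$-Lipschitz, uniformly in $n$; in particular each $m_n$ is continuous, hence Borel, so $(m_n)_\ast\nu_n$ is well defined.

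The third step is the standard concentration-function comparison. For a Borel set $B\subset\Pro_1(X_n)$ with $(\m_n)_{\nu_n}(B)=\nu_n(m_n^{-1}(B))\geq 1/2$, $L$-Lipschitzness of $m_n$ gives $m_n^{-1}(B_\varepsilon)\supset\bigl(m_n^{-1}(B)\bigr)_{\varepsilon/L}$ for every $\varepsilon>0$ (where $(\cdot)_t$ denotes the open $t$-neighbourhood), whence
\begin{align}
 (\m_n)_{\nu_n}(B_\varepsilon)\geq\nu_n\Bigl(\bigl(m_n^{-1}(B)\bigr)_{\varepsilon/L}\Bigr)\geq 1-\alpha_{X_n}(\varepsilon/L).\notag
\end{align}
Taking the supremum over admissible $B$ yields $\alpha_{\Pro_1(X_n)}(\varepsilon)\leq\alpha_{X_n}(\varepsilon/L)$; as $\{(X_n,d_n,\nu_n)\}_n$ is a L\'evy family the right-hand side tends to $0$ for each fixed $\varepsilon>0$, so the left-hand side does too, which is the assertion. (Equivalently, one could test with $1$-Lipschitz functions $F$ on $\Pro_1(X_n)$, note that $F\circ m_n$ is $L$-Lipschitz on $X_n$ with the same push-forward distribution, and transfer the observable-diameter ($\Ob$) bound.)

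I do not expect a genuine obstacle here: the content is just that $m_n$ does not expand distances by more than the factor $1-\kappa_0$, and concentration is monotone under such maps. The only points deserving care are bookkeeping: aligning the above with whichever formulation of ``L\'evy family'' is fixed in Definition \ref{levydef} (the concentration function and the observable diameter give the same notion up to harmless rescaling), keeping $L$ strictly positive so that the rescaling $\varepsilon\mapsto\varepsilon/L$ is legitimate even if $\kappa_0$ is close to $1$, and the trivial verification that $m_n$ is measurable so that the push-forward makes sense.
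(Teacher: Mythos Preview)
Your proposal is correct and rests on the same idea as the paper: $(\m_n)_{\nu_n}=(m_n)_\ast\nu_n$, the curvature bound makes $m_n$ uniformly $(1-\kappa_0)$-Lipschitz, and the Lipschitz image of a L\'evy family is again L\'evy. The execution differs slightly in formalism. Your primary argument is phrased via the concentration function $\alpha$, whereas Definition~\ref{levydef} in the paper is stated purely through the observable diameter $\Ob$; the paper therefore argues directly with $\Ob$, factoring through the rescaled space $CX_n=(X_n,Cd_n,\nu_n)$ with $C=1-\kappa_0$ and proving the two inequalities $\Ob(CX_n;-\kappa)\leq C\,\Ob(X_n;-\kappa)$ and $\Ob(\Pro_1(X_n))\leq\Ob(CX_n)$, the latter from the inclusion of test-function classes (a $1$-Lipschitz $F$ on $\Pro_1(X_n)$ gives $F\circ m_n$ $1$-Lipschitz on $CX_n$ with the same push-forward law). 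Your parenthetical ``equivalently'' remark is exactly this argument, so to match the paper's definition you would simply promote that parenthetical to the main line and drop the concentration-function detour; your route via $\alpha$ is equally valid but requires the (standard, yet unstated here) equivalence between the $\alpha$- and $\Ob$-formulations of the L\'evy property.
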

\section{Preliminaries}
Let $(X,d)$ be a Polish metric space and $\mathcal{B}(X)$ the set of all Borel sets in $X$.   
We denote by $\Pro(X)$ the set of all Borel probability measures on $X$ equipped 
with the weak topology. The following well-known proposition is a characterization of the 
relative compactness of the family of probability measures. 
\begin{prop}[\cite{B}*{Theorem 5.1}]\label{proh}
 A family of probability measures $\{\mu_n\}_{n\in\mathbb{N}}$ is relatively compact in $\Pro(X)$ 
 if and only if 
 $\{\mu_n\}_{n\in\mathbb{N}}$ is tight, that is, for given $\epsilon>0$ there exists a compact subset 
 $K_{\epsilon}\subset X$ such that $\mu_n(X\setminus K_{\epsilon})\leq\epsilon$ for any 
 $n\in\mathbb{N}$. 
\end{prop}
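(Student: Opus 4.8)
The final statement is Prokhorov's theorem, quoted as Proposition~\ref{proh}, so I treat it as a known external result and my task is to outline how one proves it from first principles. The plan is to establish the two implications of the equivalence separately, with the difficult direction being tightness $\Rightarrow$ relative compactness.

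First I would prove the easy direction: relative compactness $\Rightarrow$ tightness. Suppose $\{\mu_n\}$ is relatively compact but \emph{not} tight. Then there is an $\epsilon>0$ for which no compact set $K$ satisfies $\mu_n(X\setminus K)\le\epsilon$ uniformly. Using separability and completeness of $X$, I would fix a sequence of finite coverings of $X$ by balls of radius $1/k$ and observe that, since total boundedness fails uniformly along the sequence, one can extract measures $\mu_{n_j}$ whose mass escapes every compact set. Passing to a weakly convergent subsequence (guaranteed by relative compactness) and using the Portmanteau theorem on open sets $U$ exhausting $X$, I would derive that the weak limit assigns total mass strictly less than $1$, contradicting that the limit is a probability measure. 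This direction only needs separability and the Portmanteau characterization of weak convergence.

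The main work is the converse: tightness $\Rightarrow$ relative compactness. The plan is a diagonal extraction argument. Fix a tight family $\{\mu_n\}$ and choose compact sets $K_1\subset K_2\subset\cdots$ with $\mu_n(X\setminus K_j)\le 1/j$ for all $n$. On the separable space $X$ I would fix a countable dense set and the countable algebra $\mathcal{A}$ generated by a basis of balls with rational radii centered at these points, arranged so the boundaries are $\mu$-small. By a diagonal argument I extract a subsequence $\{\mu_{n_k}\}$ along which $\mu_{n_k}(A)$ converges for every $A\in\mathcal{A}$; tightness confines the relevant mass to the $K_j$, so the limiting set function is genuinely a probability premeasure and not defective. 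I would then define a candidate limit on the algebra, verify finite additivity and the tightness-driven continuity at $\emptyset$, and invoke Carath\'eodory's extension theorem to obtain a Borel probability measure $\mu$. Finally I would check, via the Portmanteau theorem, that $\mu_{n_k}\to\mu$ weakly, using that convergence on the generating algebra together with tightness upgrades to convergence of $\liminf$ of open-set masses.

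The hard part will be the extension step: controlling the boundaries so that the countable algebra determines a genuine (countably additive) limit measure rather than a merely finitely additive set function that loses mass at infinity. This is exactly where tightness is indispensable, since it forces continuity from above at the empty set and rules out the escape of mass that would otherwise obstruct countable additivity. Separability and completeness of $X$ enter precisely to guarantee the existence of the countable generating algebra and the compact exhaustion. Once these ingredients are in place, both the extension and the verification of weak convergence follow by standard measure-theoretic bookkeeping, and the equivalence asserted in Proposition~\ref{proh} is established.
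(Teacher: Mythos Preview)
The paper does not prove Proposition~\ref{proh} at all: it is stated with a citation to Billingsley \cite{B}*{Theorem 5.1} and used as a black box. There is therefore no ``paper's own proof'' to compare your attempt against.

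Your outline is a recognisable sketch of the classical proof of Prokhorov's theorem and is broadly correct. A couple of points could be tightened. In the easy direction, the contradiction via Portmanteau runs as follows: if no finite union $F_k$ of radius-$1/m$ balls captures mass $\ge 1-\epsilon$ uniformly, pick $n_k$ with $\mu_{n_k}(F_k)<1-\epsilon$, pass to a weak limit $\mu$, and use lower semicontinuity on the open set $F_j$ (for each fixed $j\le k$) to get $\mu(F_j)\le\liminf_k\mu_{n_k}(F_j)\le 1-\epsilon$, whence $\mu(X)\le 1-\epsilon$, a contradiction. Your phrasing ``mass escapes every compact set'' is a little imprecise since the argument actually works ball-by-ball at a fixed scale. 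In the hard direction, the step you flag as delicate---promoting the finitely additive limit on the generating algebra to a countably additive Borel measure---is usually handled not by Carath\'eodory directly but by defining $\mu(G)=\sup\{\lim_k\mu_{n_k}(A):A\in\mathcal{A},\,\overline{A}\subset G\}$ on open sets and checking outer regularity, or alternatively by restricting to each compact $K_j$ (where the space of measures is automatically compact and metrizable) and then patching; your Carath\'eodory route can be made to work but you should be explicit about how tightness yields continuity at $\emptyset$.

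Since the paper treats this as an external reference, a one-line citation would have sufficed here.
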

We use the following proposition later. 
\begin{prop}[\cite{B}*{Theorem 2.1}]
 Let $\mu_n, n=1,2,3\ldots$ and $\mu$ be Borel probability measures on $X$ 
 such that $\mu_n$ converges to $\mu$ 
 weakly. Then for any closed subset $C\subset X$ and any open subset $G\subset X$, we have
 \begin{align}
  \limsup_{n\rightarrow 0}\mu_n(C)\leq \mu(C),\label{usc}\\
  \liminf_{n\rightarrow 0}\mu_n(G)\geq \mu(G).\label{lsc}
 \end{align} 
\end{prop}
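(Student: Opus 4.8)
The plan is to derive both inequalities from the defining property of weak convergence, namely that $\int_X f\,d\mu_n \to \int_X f\,d\mu$ for every bounded continuous $f$, by approximating indicator functions of closed sets from above by continuous functions. I would first establish \eqref{usc} for closed sets and then obtain \eqref{lsc} for open sets by passing to complements.

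First I would fix a closed set $C \subset X$ and, for each $\epsilon > 0$, introduce the function
\begin{align}
 f_\epsilon(x) := \max\left\{0,\, 1 - \frac{1}{\epsilon}\,d(x,C)\right\}, \notag
\end{align}
where $d(x,C) := \inf_{c \in C} d(x,c)$. This $f_\epsilon$ is bounded (taking values in $[0,1]$), Lipschitz, hence continuous, equal to $1$ on $C$, and supported on the closed $\epsilon$-neighborhood of $C$. The key pointwise domination is $\mathbf{1}_C \leq f_\epsilon$, which yields $\mu_n(C) = \int_X \mathbf{1}_C\,d\mu_n \leq \int_X f_\epsilon\,d\mu_n$ for every $n$. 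Taking $\limsup_n$ and invoking weak convergence gives
\begin{align}
 \limsup_{n\to\infty}\mu_n(C) \leq \limsup_{n\to\infty}\int_X f_\epsilon\,d\mu_n = \int_X f_\epsilon\,d\mu. \notag
\end{align}

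Next I would let $\epsilon \to 0$. Since $C$ is closed, every point $x \notin C$ satisfies $d(x,C) > 0$, so $f_\epsilon(x) \to 0$, while $f_\epsilon(x) = 1$ for $x \in C$; thus $f_\epsilon \downarrow \mathbf{1}_C$ pointwise. As the $f_\epsilon$ are uniformly bounded by $1$, the dominated convergence theorem gives $\int_X f_\epsilon\,d\mu \to \mu(C)$, establishing \eqref{usc}. Finally, for an open set $G$ the complement $C := X \setminus G$ is closed, and writing $\mu_n(G) = 1 - \mu_n(C)$ and $\mu(G) = 1 - \mu(C)$ converts \eqref{usc} into
\begin{align}
 \liminf_{n\to\infty}\mu_n(G) = 1 - \limsup_{n\to\infty}\mu_n(C) \geq 1 - \mu(C) = \mu(G), \notag
\end{align}
which is \eqref{lsc}. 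The only mildly delicate point is verifying the continuity of $f_\epsilon$ and the pointwise monotone limit $f_\epsilon \downarrow \mathbf{1}_C$; both follow routinely from the fact that $x \mapsto d(x,C)$ is $1$-Lipschitz and from the closedness of $C$, so I do not expect a genuine obstacle here. Since this is a standard characterization cited from \cite{B}, the argument is entirely elementary once the approximating functions are in hand.
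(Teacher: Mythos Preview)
Your argument is correct and is exactly the standard proof of the Portmanteau theorem as found in \cite{B}; the paper itself gives no proof at all, simply citing the result as \cite{B}*{Theorem 2.1}. There is nothing to compare---you have supplied the classical approximation of $\mathbf{1}_C$ from above by Lipschitz functions, which is precisely what Billingsley does.
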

For any $\mu,\nu\in\Pro(X)$, we call a measure $\pi\in\Pro(X\times X)$ a \emph{coupling} between 
$\mu$ and $\nu$ if 
\begin{align}
 \pi(A\times X)=\mu(A),\quad\pi(X\times A)=\nu(A)\quad \text {for any}\; A\in\mathcal{B}(X).
\end{align}
We denote by $\Pi(\mu,\nu)$ the set of all couplings between $\mu$ and $\nu$. 
We define a metric on $\Pro(X)$ 
that induces a topology stronger than the weak topology. 
\begin{defn}\label{LpWp}
 For $\mu,\nu\in\Pro(X)$, $1\leq p\leq \infty$, 
 the \emph{$L^p$-Wasserstein distance} $W_p(\mu,\nu)$ between $\mu$ and 
 $\nu$ is defined by 
 \begin{align}
  W_p(\mu,\nu)
  :=\inf\left\{\|d\|_{L^p(\pi)} ; \pi\in\Pi(\mu,\nu)\right\}.\label{Wp}
 \end{align}
 $W_p$ is finite on 
 \begin{align}
  \Pro_p(X):=\left\{\mu\in\Pro(X) ; \|d(o,\cdot)\|_{L^p(\mu)}<\infty\;\text{for some}\; o\in X\right\}.\notag
 \end{align}
\end{defn}
It is a known fact that the metric space $(\Pro_p(X),W_p)$ is a Polish metric space. 
It is compact if $X$ is compact. 
We characterize the convergence in $\Pro_p(X)$. 
\begin{thm}[\cite{V2}*{Definition 6.8, Theorem 6.9}]\label{equiv}
 Let $\mu_n,\,n=1,2,\ldots$, and $\mu$ be Borel probability measures on $X$. 
 The following conditions $(1)$-$(5)$ are all equivalent to each other: 
 \begin{enumerate}
  \item $W_p(\mu_n,\mu)\rightarrow 0$ as $n\rightarrow \infty$.
  \item $\mu_n\rightarrow \mu$ weakly as $n\rightarrow \infty$ and for some $x_0\in X$
  \begin{align}
  \limsup_{n\rightarrow\infty}\int d(x_0, x)^p\,\mu_n(dx)\leq \int d(x_0,x)^p\,\mu(dx). \label{Wpequiv}
  \end{align}
  \item $\mu_n\rightarrow \mu$ weakly as $n\rightarrow \infty$ and for some $x_0\in X$
  \begin{align}
  \int d(x_0,x)^p\,\mu_n(dx)\rightarrow \int d(x_0,x)^p\,\mu(dx). \notag
  \end{align}
  \item $\mu_n\rightarrow \mu$ weakly as $n\rightarrow \infty$ and for some $x_0\in X$
  \begin{align}
  \lim_{R\rightarrow\infty}\limsup_{n\rightarrow\infty}\int_{d(x_o,x)\geq R}
  d(x_0,x)^p\,\mu_n(dx)=0.\notag 
  \end{align}
  \item For all continuous functions $\phi$ with $|\phi(x)|\leq C(1+d(x_0,x)^p)$, $C\in\R$, one has \\
  \begin{align}
  \int \phi(x)\,\mu_n(dx)\rightarrow\int\phi(x)\,\mu(dx). \notag  
  \end{align}
 \end{enumerate}
\end{thm}
The following theorem is well known. 
\begin{thm}[Kantorovich duality, \cite{V}*{Theorem 1.3}]
 Let $\mu, \nu\in\Pro(X)$ and $\Phi_{d^p}(\mu,\nu)$ be the set of all pairs of measurable functions 
 $(\phi, \psi)\in L^1(\mu)\times L^1(\nu)$ satisfying 
 \begin{align}
  \phi(x)+\psi(y)\leq d^p(x,y)\notag
 \end{align} 
 for $\mu$-a.e. $x\in X$ and $\nu$-a.e. $y\in X$. 
 Then we have 
 \begin{align}
  W_p(\mu,\nu)^p
  =\sup_{(\phi,\psi)\in\Phi_{d^p}(\mu,\nu)}\left\{\int_X\phi\, d\mu+\int_X\psi\, d\nu \right\}.\label{Kd}
 \end{align}
\end{thm}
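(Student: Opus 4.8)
The plan is to prove the two inequalities in \eqref{Kd} separately; the bound ``dual value $\le W_p(\mu,\nu)^p$'' is elementary, and the reverse inequality, i.e. the absence of a duality gap, is the real content. For the easy bound, fix an admissible pair $(\phi,\psi)\in\Phi_{d^p}(\mu,\nu)$ and any coupling $\pi\in\Pi(\mu,\nu)$. Integrating the defining inequality $\phi(x)+\psi(y)\le d^p(x,y)$ against $\pi$ and using the marginal identities $\pi(\,\cdot\,\times X)=\mu$ and $\pi(X\times\,\cdot\,)=\nu$, I obtain
\begin{align}
\int_X\phi\,d\mu+\int_X\psi\,d\nu=\int_{X\times X}\bigl(\phi(x)+\psi(y)\bigr)\,d\pi\le\int_{X\times X}d^p(x,y)\,d\pi.\notag
\end{align}
Taking the infimum over $\pi\in\Pi(\mu,\nu)$ on the right and then the supremum over $(\phi,\psi)$ on the left shows that the right-hand side of \eqref{Kd} is $\le W_p(\mu,\nu)^p$.

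For the reverse inequality I would use Fenchel--Rockafellar convex duality, writing $c:=d^p$ and treating first the case of compact $X$. On the Banach space $E:=\mathcal{C}_b(X\times X)$ I introduce the two convex functionals $\Theta(u)=0$ if $u(x,y)\ge -c(x,y)$ everywhere and $+\infty$ otherwise, and $\Xi(u)=\int_X\phi\,d\mu+\int_X\psi\,d\nu$ if $u$ splits as $u(x,y)=\phi(x)+\psi(y)$ with $\phi,\psi\in\mathcal{C}_b(X)$ and $+\infty$ otherwise; $\Xi$ is well defined on the splitting subspace because $\mu$ and $\nu$ are probability measures. A direct check shows that $\inf_{u\in E}[\Theta(u)+\Xi(u)]$ equals minus the right-hand side of \eqref{Kd}, after the sign change $(\phi,\psi)\mapsto(-\phi,-\psi)$. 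The Fenchel--Rockafellar hypothesis holds: taking $u_0\equiv 2a$ with $a>0$ makes $\Theta$ identically $0$, hence continuous, on a neighborhood of $u_0$, while $\Xi(u_0)<\infty$. Computing the Legendre transforms, $\Theta^\ast(-z)=\int_{X\times X}c\,dz$ when $z\ge 0$ and $+\infty$ otherwise, while $\Xi^\ast(z)=0$ when the marginals of $z$ are $\mu$ and $\nu$ and $+\infty$ otherwise; here I use the Riesz representation theorem to identify $E^\ast$ with the finite Radon measures on the compact space $X\times X$. The duality theorem then gives $-(\text{right-hand side of }\eqref{Kd})=-\min_{\pi\in\Pi(\mu,\nu)}\int_{X\times X}c\,d\pi$, which is the desired equality $W_p(\mu,\nu)^p=(\text{right-hand side of }\eqref{Kd})$ on compact $X$.

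The main obstacle is the passage to a general, noncompact Polish space, where $\mathcal{C}_b(X\times X)^\ast$ strictly contains the finite Radon measures, so the conjugate computation above no longer returns an honest coupling. I would handle this by exhausting $X$ with compact sets $K_j\uparrow X$, applying the compact case to the renormalized restrictions of $\mu$ and $\nu$ to $K_j\times K_j$, and passing to the limit $j\to\infty$. Tightness of $\mu$ and $\nu$, which is automatic on a Polish space by Proposition \ref{proh}, controls the mass escaping to infinity, while lower semicontinuity of $c=d^p$ together with monotone convergence controls the transport cost, producing the remaining inequality $W_p(\mu,\nu)^p\le(\text{right-hand side of }\eqref{Kd})$ in the limit. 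Finally, to see that the supremum may be taken over the $L^1(\mu)\times L^1(\nu)$ pairs of the statement, I would replace any admissible $\phi$ by the $c$-transform $\psi(y):=\inf_x\{c(x,y)-\phi(x)\}$ and then $\phi$ by the corresponding $c$-transform of $\psi$; this only increases the objective and yields canonical $c$-concave potentials, whose $\mu$- and $\nu$-integrability, hence membership in $\Phi_{d^p}(\mu,\nu)$, holds once $\mu,\nu\in\Pro_p(X)$, the case relevant to this paper, where $d^p\in L^1(\mu)\cap L^1(\nu)$; in general \eqref{Kd} is understood as an identity of values in $[0,\infty]$.
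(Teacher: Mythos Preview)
The paper does not give its own proof of this statement: the theorem is quoted verbatim as \cite{V}*{Theorem 1.3} and followed only by a remark on the $p=1$ case, with no proof environment. So there is no ``paper's own proof'' to compare against.

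Your sketch is essentially the standard argument from Villani's book: the easy inequality by integrating the constraint against an arbitrary coupling, Fenchel--Rockafellar duality on $\mathcal{C}_b(X\times X)$ for compact $X$ (with the Riesz identification of the dual), and then an approximation/exhaustion step to pass to general Polish spaces using tightness. That is the route of \cite{V}*{Theorem 1.3}, so in spirit your proposal matches the cited source even though the present paper itself proves nothing here.

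Two minor comments on the write-up. First, your final paragraph about $c$-transforms is not needed for the statement as formulated: once the compact-case duality produces continuous bounded pairs, these already lie in $\Phi_{d^p}(\mu,\nu)$, so the supremum over the larger $L^1$ class is at least as big, while your easy inequality caps it from above. Second, the limiting step from compacta to a general Polish $X$ is where all the technical work hides (controlling both the primal cost and the dual potentials simultaneously along the exhaustion); your outline is correct but you should be aware that this is where a full proof spends most of its effort, and that \cite{V} handles it via a careful truncation-and-tightness argument rather than a bare monotone-convergence appeal.
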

\begin{rem}
 If $p=1$, we may replace $(\phi,\psi)\in\Phi_{d}$ by $(\phi,-\phi)$, where 
 $\phi$ is a 1-Lipschitz function \cite{V}. 
\end{rem}
Assume that there exists a lower bound $\kappa_0\in\mathbb{R}$ of the $p$-coarse Ricci curvature 
for a metric space with a random walk $(X,d,\{m_x\}_{x\in X})$. 
Then by (\ref{cRicdef}), we get $W_p(m_x,m_y)\leq (1-\kappa_0)d(x,y)$ 
(also see Proposition \ref{convo}). 
This means that the map $m : X\rightarrow\Pro_p(X),\; x\mapsto m_x$,  
is a $(1-\kappa_0)$-Lipschitz map. 
This point of view is very important in this paper. 

Set $\mu\ast m(dx):=\int_Xm_x(dy)\mu(dx)$ for $\mu\in\Pro(X)$. 
We call a measure $\nu\in\Pro(X)$ an \emph{invariant measure} if $\nu=\nu\ast m$. 
An invariant measure $\nu$ is \emph{reversible} if $m_x(dy)\nu(dx)=m_y(dx)\nu(dy)$.  
The following properties are useful. 
\begin{prop}[\cite{O}*{Proposition 20}]\label{convo}
 Let $(X,d,\{m_x\}_{x\in X})$ be a metric space with a random walk, $\kappa_0$ a real number 
 and $1\leq p<\infty$. 
 Then, $\inf_{x,y}\kappa_p(x,y)\geq\kappa_0$ if and only if 
 \begin{align}
  W_p(\mu\ast m,\nu\ast m)\leq (1-\kappa_0)W_p(\mu,\nu)\notag
 \end{align} 
 holds for any $\mu,\nu\in\Pro_p(X)$.  
\end{prop}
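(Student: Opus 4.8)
The plan is to prove both implications by exploiting the Kantorovich duality and the linearity of the averaging operation $\mu\mapsto\mu\ast m$. First I would observe the elementary fact that, by the triangle inequality for $W_p$ and the definition of $\mu\ast m$, the map $\mu\mapsto\mu\ast m$ is a kind of "transport composition": if $\pi\in\Pi(\mu,\nu)$ is an optimal coupling, then gluing $\pi$ with the random walk kernels produces a coupling between $\mu\ast m$ and $\nu\ast m$. Concretely, one builds the measure $\Pi(dx,dy):=\int m_x\otimes m_y\,\pi(dx,dy)$ on $X\times X$ (after first lifting $\pi$ to a measure on $X\times X\times X\times X$ via the kernels $m_x,m_y$ and then projecting onto the last two coordinates); its marginals are $\mu\ast m$ and $\nu\ast m$. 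The hard part is then to estimate $\|d\|_{L^p(\Pi)}$ in terms of $W_p(m_x,m_y)$ fibrewise and then integrate.

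For the "only if" direction, assume $\inf_{x,y}\kappa_p(x,y)\geq\kappa_0$, so $W_p(m_x,m_y)\leq(1-\kappa_0)d(x,y)$ for all $x,y$. Fix $\mu,\nu\in\Pro_p(X)$ and an optimal coupling $\pi\in\Pi(\mu,\nu)$. For $\pi$-a.e.\ $(x,y)$ choose (measurably, via a measurable selection of optimal couplings --- this measurability is the one genuinely technical point, and it is standard, cf.\ the theory in \cite{V2}) an optimal coupling $q_{x,y}\in\Pi(m_x,m_y)$ with $\|d\|_{L^p(q_{x,y})}=W_p(m_x,m_y)$. Set $Q:=\int_{X\times X} q_{x,y}\,\pi(dx,dy)\in\Pi(\mu\ast m,\nu\ast m)$. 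Then
\begin{align}
W_p(\mu\ast m,\nu\ast m)^p
\leq \|d\|_{L^p(Q)}^p
= \int_{X\times X} W_p(m_x,m_y)^p\,\pi(dx,dy)
\leq (1-\kappa_0)^p\int_{X\times X} d(x,y)^p\,\pi(dx,dy),\notag
\end{align}
and the last integral equals $W_p(\mu,\nu)^p$ since $\pi$ is optimal; taking $p$-th roots gives the claim. (If $\kappa_0>1$ the statement is vacuous in the sense that $W_p(m_x,m_y)\le 0$ forces all $m_x$ to coincide, and one checks the inequality trivially; normally one only cares about $\kappa_0\le 1$.)

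For the "if" direction, assume $W_p(\mu\ast m,\nu\ast m)\leq(1-\kappa_0)W_p(\mu,\nu)$ for all $\mu,\nu\in\Pro_p(X)$. Take $\mu=\delta_x$ and $\nu=\delta_y$ for distinct $x,y\in X$. By the defining relation $\delta_x\ast m=m_x$ (which is immediate from $\mu\ast m(dz)=\int m_{x'}(dz)\,\mu(dx')$) and $W_p(\delta_x,\delta_y)=d(x,y)$, the hypothesis becomes $W_p(m_x,m_y)\leq(1-\kappa_0)d(x,y)$, i.e.\ $\kappa_p(x,y)\geq\kappa_0$; taking the infimum over $x,y$ finishes the proof. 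I expect the only real obstacle to be the measurable-selection step in the gluing construction for the "only if" direction; everything else is a direct manipulation of couplings and the definitions, and one may alternatively avoid the selection issue entirely by arguing via Kantorovich duality, showing that any admissible pair $(\phi,\psi)\in\Phi_{d^p}(\mu\ast m,\nu\ast m)$ pushes back, through the kernels, to an admissible pair for a $(1-\kappa_0)^p$-scaled cost between $\mu$ and $\nu$.
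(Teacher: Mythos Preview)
The paper does not give its own proof of this proposition: it is quoted from \cite{O}*{Proposition 20} and stated without argument, followed immediately by Corollary~\ref{Ocor}. There is therefore nothing in the paper to compare your proposal against.

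That said, your argument is correct and is precisely the standard proof (and essentially Ollivier's). The ``if'' direction via Dirac measures is immediate. For the ``only if'' direction, the gluing construction $Q=\int q_{x,y}\,\pi(dx,dy)$ is the right idea, and your identification of the measurable-selection of optimal $q_{x,y}$ as the only technical point is accurate; this is handled e.g.\ by \cite{V2}*{Corollary 5.22}. Your alternative suggestion to bypass selection via Kantorovich duality also works and is in the spirit of how the present paper handles the analogous step in the proof of Theorem~\ref{lipext}. One small quibble: in the edge case $\kappa_0>1$ the hypothesis $W_p(m_x,m_y)\le(1-\kappa_0)d(x,y)<0$ is impossible unless all $m_x$ coincide (forcing $X$ to be a single point if one insists on strict inequality for distinct $x,y$), so the statement is vacuous rather than ``trivially checked''; but this does not affect the substance.
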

\begin{cor}[\cite{O}*{Corollary 21}]\label{Ocor}
 Let $(X,d,\{m_x\}_{x\in X})$ be a metric space with a random walk and $1\leq p<\infty$. 
 Assume that the 
 $p$-coarse Ricci curvature satisfies $\kappa_p(x,y)\geq \kappa_0>0$ for any $x,y\in X$ and for 
 a constant $\kappa_0$. 
 Then there exists a unique invariant measure $\nu\in\Pro_p(X)$.
\end{cor}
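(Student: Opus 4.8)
The plan is to recognize the averaging operator $T\colon\Pro_p(X)\to\Pro_p(X)$, $T\mu:=\mu\ast m$, as a strict contraction of the complete metric space $(\Pro_p(X),W_p)$, and then apply the Banach fixed point theorem: an invariant measure is exactly a fixed point of $T$, and the contraction principle yields existence and uniqueness simultaneously.

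First I would extract the contraction estimate from Proposition \ref{convo}. Since $\kappa_p(x,y)\geq\kappa_0>0$ for all distinct $x,y$, Proposition \ref{convo} gives
\[
W_p(\mu\ast m,\nu\ast m)\leq(1-\kappa_0)\,W_p(\mu,\nu)\qquad\text{for all }\mu,\nu\in\Pro_p(X).
\]
Moreover $\kappa_p(x,y)=1-W_p(m_x,m_y)/d(x,y)\leq 1$ always (when $X$ has at least two points; the one-point case is trivial), so $\kappa_0\leq 1$ and hence $0\leq 1-\kappa_0<1$. Thus $T$ is a genuine contraction with ratio $1-\kappa_0$.

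Next I would verify the self-map property $T(\Pro_p(X))\subseteq\Pro_p(X)$, which is needed so that the fixed point theorem can be applied on a complete space. Fix a base point $o\in X$. Since $\delta_x\ast m=m_x$ and $W_p(\delta_x,\delta_o)=d(x,o)$, the contraction estimate applied to the Dirac masses gives $W_p(m_x,m_o)\leq(1-\kappa_0)d(x,o)$, whence by the triangle inequality in $\Pro_p(X)$,
\[
\Bigl(\int_X d(o,y)^p\,m_x(dy)\Bigr)^{1/p}=W_p(m_x,\delta_o)\leq(1-\kappa_0)d(o,x)+W_p(m_o,\delta_o),
\]
and $W_p(m_o,\delta_o)=\|d(o,\cdot)\|_{L^p(m_o)}<\infty$ because $m_o\in\Pro_p(X)$. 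Raising to the $p$-th power and integrating against $\mu\in\Pro_p(X)$ yields
\[
\int_X d(o,y)^p\,(\mu\ast m)(dy)=\int_X\!\!\int_X d(o,y)^p\,m_x(dy)\,\mu(dx)<\infty,
\]
so indeed $\mu\ast m\in\Pro_p(X)$. Then, since $(\Pro_p(X),W_p)$ is complete (it is even Polish, as recorded above) and $T$ is a $(1-\kappa_0)$-contraction with $1-\kappa_0<1$, the Banach fixed point theorem produces a unique $\nu\in\Pro_p(X)$ with $\nu\ast m=\nu$, which is by definition the unique invariant measure.

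I expect the only point requiring genuine care is the self-map property $T(\Pro_p(X))\subseteq\Pro_p(X)$, since Proposition \ref{convo} is formulated only for measures already lying in $\Pro_p(X)$; the rest is a direct application of the contraction principle. It is worth noting that uniqueness in fact needs no completeness at all: two invariant measures $\nu_1,\nu_2\in\Pro_p(X)$ would satisfy $W_p(\nu_1,\nu_2)=W_p(\nu_1\ast m,\nu_2\ast m)\leq(1-\kappa_0)W_p(\nu_1,\nu_2)$, forcing $\nu_1=\nu_2$; completeness of $(\Pro_p(X),W_p)$ is invoked only to guarantee existence of the fixed point.
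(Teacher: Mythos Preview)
The paper does not supply its own proof of this corollary; it simply quotes the result from Ollivier \cite{O}*{Corollary 21}. Your argument is correct and is precisely the standard one: Proposition~\ref{convo} turns the averaging map $\mu\mapsto\mu\ast m$ into a $(1-\kappa_0)$-contraction of the complete metric space $(\Pro_p(X),W_p)$, and the Banach fixed point theorem then gives existence and uniqueness of the invariant measure. Your verification that $T$ maps $\Pro_p(X)$ into itself is a genuinely necessary step and is carried out correctly.
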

\section{Extension of Lipschitz maps}\label{seclip}
\begin{proof}[Proof of Theorem \ref{lipext}]
 We assume that both $\inf_{x,y}\kappa_p^m(x,y)$ and 
 $\inf_{\mu,\nu}\kappa_p^{\tilde{m}}(\mu,\nu)$ are finite value. 
 Let $\kappa_0\in\mathbb{R}$ such that $\inf_{x,y}\kappa_p^m(x,y)=\kappa_0$. 
 Set $C:=1-\kappa_0$. We know that the $p$-coarse Ricci curvature is bounded below by $\kappa_0$ 
 if and only if the map $m : X\rightarrow \Pro_p(X)$ is $C$-Lipschitz.  
 We prove that the map $\m$ is a $C$-Lipschitz map from $\Pro_p(X)$ to $\Pro_p(\Pro_p(X))$ 
 to show $\inf_{\mu,\nu}\kappa^{\tilde{m}}(\mu,\nu)\geq \inf_{x,y}\kappa^m(x,y)$. 
 Let $\mu, \nu\in\Pro_p(X)$ and $(\phi,\psi)\in\Phi_{W_p^p}(\m_{\mu},\m_{\nu})$. 
 Since the map $m$ is a $C$-Lipschitz function, the image of $m$, $m(X)$, 
 is a universal measurable set on $\Pro(\Pro_p(X))$ (\cite{Bo}*{Theorem 7.4.1}). 
 Then we obtain $\m_{\mu}(m(X))=\int_X\mu(dx)=1$ and $\m_{\nu}(m(X))=1$. 
 From the above claim, the inequality  
 \begin{align}
  \phi(m_x)+\psi(m_y)\leq W_p^p(m_x, m_y)\leq C^pd^p(x,y)\label{mble}
 \end{align}
 holds for $\mu$-almost every $x\in X$ and $\nu$-almost every $y\in X$. 
 By (\ref{mble}), we have $(\phi\circ m/C^p,\psi\circ m/C^p)\in\Phi_{d^p}(\mu,\nu)$. 
 Then by using (\ref{Kd}) we have 
 \begin{align}
  &\int_{\Pro_p(X)}\phi(\sigma)\,\m_{\mu}(d\sigma)
  +\int_{\Pro_p(X)}\psi(\sigma)\,\m_{\nu}(d\sigma)
  =\int_X\phi(m_x)\,\mu(dx)+\int_X\psi(m_x)\,\nu(dx)\notag\\
  &=C^p\left\{\int_X\frac{\phi(m_x)}{C^p}\,\mu(dx)+\int_X\frac{\psi(m_x)}{C^p}\,\nu(dx)\right\}\notag\\
  &\leq C^pW_p^p(\mu,\nu)<\infty.\notag
 \end{align}
 Taking the supremum over all $(\phi,\psi)\in\Phi_{W_p^p}(\m_{\mu},\m_{\nu})$, 
 we get $W_p(\m_{\mu},\m_{\nu})\leq CW_p(\mu,\nu)$, which implies 
 $\tilde{m}_{\mu}\in\Pro_p(\Pro_p(X))$ for any $\mu\in\Pro_p(X)$ and 
 $\inf\kappa^{\tilde{m}}(\mu,\nu)\geq \inf\kappa^m(x,y)$. 
 
 It is easy to prove the converse implication. 
 Indeed, We assume $\inf_{\mu,\nu}\kappa_p^{\tilde{m}}(\mu,\nu)=\kappa_0$. 
 Since $\tilde{m}_{\delta_x}=m_x$ for any $x\in X$, we have 
 \begin{align}
  W_p(m_x,m_y)=W_p(\tilde{m}_{\delta_x},\tilde{m}_{\delta_y})
  \leq (1-\kappa_0)W_p(\delta_x,\delta_y)=(1-\kappa_0)d(x,y).\notag
 \end{align}
 When either $\inf_{x,y}\kappa_p^m(x,y)$ or $\inf_{\mu,\nu}\kappa_p^{\tilde{m}}(\mu,\nu)$ is not finite, 
 the above arguments lead infiniteness of the other. In this sense, we get (\ref{lipexteq}). 
\end{proof}
\section{Convergence to invariant measure}\label{ex}
We first show a relation between the invariant distribution of 
$\{\tilde{m}_{\mu}\}_{\mu}$ on $\Pro_p(X)$ and of $\{m_x\}_{x\in X}$ on $X$. 
\begin{prop}\label{inv}
 Let $(X,d,\{m_x\}_{x\in X})$ be a metric space with a random walk and $\nu$ an invariant distribution 
 for $\{m_x\}_{x\in X}$. Then $\tilde{m}_{\nu}$ is an invariant distribution 
 for $\{\tilde{m}_{\mu}\}_{\mu\in\Pro_p(X)}$. 
 Moreover, if $\nu$ is reversible, then $\m_{\nu}$ is also reversible. 
\end{prop}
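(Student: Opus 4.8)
The plan is to unwind the definition \eqref{ext} of $\tilde m$ and the definition of the convolution $\mu \ast m$, and to verify the two assertions by a direct computation against test functions $f \in \mathcal{C}_b(\Pro_p(X))$. First I would write out what the statement $\tilde m_\nu = \tilde m_\nu \ast \tilde m$ means: by definition of $\ast$ on $\Pro(\Pro_p(X))$ we have, for $f\in\mathcal{C}_b(\Pro_p(X))$,
\begin{align}
  \int_{\Pro_p(X)} f(\sigma)\,(\tilde m_\nu \ast \tilde m)(d\sigma)
  = \int_{\Pro_p(X)}\!\!\int_{\Pro_p(X)} f(\tau)\,\tilde m_\sigma(d\tau)\,\tilde m_\nu(d\sigma).\notag
\end{align}
Using \eqref{ext} with the bounded continuous function $\sigma\mapsto \int f\,d\tilde m_\sigma$ on the outer integral, and then \eqref{ext} again with $x\mapsto \int f\,d m_x = \int f(m_x)$ wait — more carefully, the inner integral $\int f(\tau)\,\tilde m_\sigma(d\tau)$ needs $\sigma$ of the form $\tilde m$ evaluated, so I first rewrite the outer $\tilde m_\nu$-integral via \eqref{ext} as an integral over $X$ against $\nu$, turning $\sigma$ into $m_x$; then $\tilde m_{m_x}$ appears, and applying \eqref{ext} once more (with base measure $m_x$) converts the inner integral into $\int_X f(m_y)\,m_x(dy)$. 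Altogether this gives $\int_X\int_X f(m_y)\,m_x(dy)\,\nu(dx) = \int_X f(m_y)\,(\nu\ast m)(dy) = \int_X f(m_y)\,\nu(dy) = \int f\,d\tilde m_\nu$, using invariance $\nu = \nu\ast m$ in the penultimate step. Since $f$ was an arbitrary bounded continuous function this proves $\tilde m_\nu$ is invariant.

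For the reversibility claim I would argue the same way but at the level of the two bivariate measures on $\Pro_p(X)\times\Pro_p(X)$, namely $\tilde m_\sigma(d\tau)\,\tilde m_\nu(d\sigma)$ and $\tilde m_\tau(d\sigma)\,\tilde m_\nu(d\tau)$: it suffices to test against product functions $f(\sigma)g(\tau)$ with $f,g\in\mathcal{C}_b(\Pro_p(X))$, since such functions are measure-determining on the product of two Polish spaces. Pushing both sides down to $X\times X$ by the same double application of \eqref{ext}, the left side becomes $\int_X\int_X f(m_x)g(m_y)\,m_x(dy)\,\nu(dx)$, i.e. the integral of $(x,y)\mapsto f(m_x)g(m_y)$ against the measure $m_x(dy)\,\nu(dx)$, and the right side becomes the integral of the same function against $m_y(dx)\,\nu(dy)$. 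Reversibility of $\nu$, $m_x(dy)\,\nu(dx)=m_y(dx)\,\nu(dy)$, makes these equal, which is exactly reversibility of $\tilde m_\nu$.

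The computation itself is routine once the bookkeeping of \eqref{ext} is set up correctly; the only point requiring a little care — and the main obstacle — is the repeated use of \eqref{ext}, since it is a priori stated only for a fixed base measure $\mu\in\Pro_p(X)$, and here it is being applied with the "base measure" itself varying (first $\nu$, then $m_x$ for $\nu$-a.e.\ $x$). To make this rigorous I would note that for any fixed $\mu$ the function $\Pro_p(X)\ni\sigma\mapsto \tilde m_\sigma$ is well defined, and that $\sigma\mapsto\int h\,d\tilde m_\sigma = \int_X h(m_x)\,\sigma(dx)$ is bounded and measurable (indeed continuous, since $m$ is Lipschitz hence continuous by Proposition \ref{convo}) for $h\in\mathcal{C}_b$, so it is a legitimate integrand for the outer application of \eqref{ext}; measurability of $x\mapsto m_x$ into $\Pro_p(X)$ is likewise guaranteed by its being Lipschitz. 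No integrability obstruction arises because all test functions are bounded. One should also remark that $\tilde m_\nu\in\Pro_p(\Pro_p(X))$ whenever $\nu\in\Pro_p(X)$ and the random walk has a uniform lower Ricci bound, which was already established in the proof of Theorem \ref{lipext}; in the generality of this proposition it is enough that $\tilde m_\nu$ is a well-defined Borel probability measure, which \eqref{ext} provides.
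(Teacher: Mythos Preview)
Your proposal is correct and follows essentially the same route as the paper: both unwind the definition \eqref{ext} twice to reduce the integrals over $\Pro_p(X)$ to integrals over $X$, then invoke invariance (respectively reversibility) of $\nu$, testing against $f\in\mathcal{C}_b(\Pro_p(X))$ for the first part and against products $f(\sigma)g(\tau)$ for the second. Your added remarks on why the iterated application of \eqref{ext} is legitimate (continuity of $m$, measure-determining product functions) are more explicit than the paper's bare computation, but the argument is the same.
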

\begin{proof}
 For any $f\in\mathcal{C}_b(\Pro_p(X))$ we have 
 \begin{align}
  &\int_{\Pro_p(X)}\int_{\Pro_p(X)}f(\sigma)\,\tilde{m}_{\mu}(d\sigma)\tilde{m}_{\nu}(d\mu)\notag\\
  &=\int_X\int_{\Pro_p(X)}f(\sigma)\,\tilde{m}_{m_x}(d\sigma)\nu(dx)\notag\\
  &=\int_X\int_Xf(m_y)\,m_x(dy)\nu(dx)\notag\\
  &=\int_Xf(m_x)\,\nu(dx)\notag\\
  &=\int_{\Pro_p(X)}f(\sigma)\,\tilde{m}_{\nu}(d\sigma).\notag
 \end{align}
 This shows that $\tilde{m}_{\nu}$ is an invariant measure for $\{\tilde{m}_{\mu}\}_{\mu}$.
 
 Supposing $\nu$ is reversible, we prove the reversibility of $\tilde{m}_{\nu}$. 
 We denote $\tilde{m}_{\nu}$ by $\tilde{\nu}$ for simplicity.  
 We have
 \begin{align}
  \int_{\Pro_p(X)}\int_{\Pro_p(X)}f(\sigma)g(\tau)\, \tilde{m}_{\sigma}(d\tau)\tilde{\nu}(d\sigma)
  &=\int_X\int_{\Pro_p(X)}f(m_x)g(\tau)\, \tilde{m}_{m_x}(d\tau)\nu(dx)\notag\\
  &=\int_X\int_Xf(m_x)g(m_y)\, m_x(dy)\nu(dx)\notag\\
  &=\int_X\int_Xf(m_x)g(m_y)\, m_y(dx)\nu(dy)\notag\\
  &=\int_X\int_{\Pro_p(X)}f(\sigma)g(m_y)\, \tilde{m}_{m_y}(d\sigma)\nu(dy)\notag\\
  &=\int_{\Pro_p(X)}\int_{\Pro_p(X)}f(\sigma)g(\tau)\, \tilde{m}_{\tau}(d\sigma)\tilde{\nu}(d\tau)\notag
 \end{align}
 for any $f,g \in\mathcal{C}_b(\Pro_p(X))$. This completes the proof.
\end{proof}
Fix $1\leq p<\infty$. Let $(X,d)$ be a compact metric space 
and $\{m_x^t\}_{x\in X,\;t>0}\subset\Pro_p(X)$ a familly of random walks. 
Assume that $\{m_x^t\}_{x\in X,\;t>0}$ is a \emph{contraction semigroup}, i.e., 
\begin{align}
m^{s+t}_x&=m^t_x\ast m_x^s,\notag\\
W_p(\sigma\ast m^t,\tau\ast m^t)&\leq f(t)W_p(\sigma,\tau)\label{contract}
\end{align}
holds for any $x,y\in X,\;t,s>0$, where $f : (0,\infty)\rightarrow (0,1)$ 
is a non-increasing function such that $f(t)\rightarrow 0$ as $t\rightarrow \infty$. 
By Proposition \ref{convo}, 
we have $W_p(\sigma\ast m^{t+s},\tau\ast m^{t+s})\leq f(t)W_p(\sigma\ast m^s,\tau\ast m^s)$.  
The contraction property (\ref{contract}) yields the existence of a unique invariant distribution $\nu$. 
We have a random walk $\{\m^t_{\mu}\}_{\mu\in\Pro_p(X)}$ as in (\ref{ext}). 
By Proposition \ref{inv}, we have a unique invariant distribution $\tilde{\nu}^t$ 
for $\{\m^t_{\mu}\}_{\mu\in\Pro_p(X)}$. 
\begin{prop}\label{ratelem}
 The measures $\tilde{\nu}^t$ converges to $\delta_{\nu}$ on $\Pro_p(X)$ 
 as $t\rightarrow \infty$. 
\end{prop}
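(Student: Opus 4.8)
The plan is to identify the lifted invariant distribution $\tilde{\nu}^t$ explicitly and then bound $W_p(\tilde{\nu}^t,\delta_{\nu})$ directly by the contraction modulus $f(t)$. By Proposition~\ref{inv} applied to the random walk $\{m^t_x\}_{x\in X}$, together with the uniqueness already recorded above, we have $\tilde{\nu}^t=\tilde{m}^t_{\nu}$. Since $X$ is compact, so is $\Pro_p(X)$, and hence $\sigma\mapsto W_p(\sigma,\nu)^p$ belongs to $\mathcal{C}_b(\Pro_p(X))$; moreover the only coupling between $\tilde{\nu}^t\in\Pro_p(\Pro_p(X))$ and the Dirac mass $\delta_{\nu}$ is the product measure $\tilde{\nu}^t\otimes\delta_{\nu}$. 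Combining the defining relation (\ref{ext}) with this test function and the product-coupling formula for the distance to a Dirac measure gives
\[
 W_p(\tilde{\nu}^t,\delta_{\nu})^p=\int_{\Pro_p(X)}W_p(\sigma,\nu)^p\,\tilde{\nu}^t(d\sigma)
 =\int_X W_p(m^t_x,\nu)^p\,\nu(dx).
\]

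It then remains to estimate $W_p(m^t_x,\nu)$ uniformly in $x\in X$. Since $\nu$ is invariant we have $\nu\ast m^t=\nu$, while $m^t_x=\delta_x\ast m^t$; hence the contraction property (\ref{contract}) yields
\[
 W_p(m^t_x,\nu)=W_p(\delta_x\ast m^t,\nu\ast m^t)\leq f(t)\,W_p(\delta_x,\nu)\leq f(t)\,\Diam(X),
\]
the last inequality because $W_p(\delta_x,\nu)^p=\int_X d(x,y)^p\,\nu(dy)\leq\Diam(X)^p$. Plugging this into the previous display gives $W_p(\tilde{\nu}^t,\delta_{\nu})\leq f(t)\,\Diam(X)$, which tends to $0$ as $t\to\infty$ since $f(t)\to 0$. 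This proves the proposition and at the same time exhibits the convergence rate $f(t)$ alluded to in Remark~\ref{convrate}.

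I do not expect a genuine obstacle here: the whole content of the argument is the contraction hypothesis (\ref{contract}) together with the elementary identity $W_p(\rho,\delta_{\nu})^p=\int_{\Pro_p(X)} W_p(\sigma,\nu)^p\,\rho(d\sigma)$. The only points requiring (routine) verification are that $\sigma\mapsto W_p(\sigma,\nu)^p$ is a legitimate test function in (\ref{ext}) and that the integrals involved are finite, both of which are immediate from the compactness of $X$; one could alternatively phrase the first step by saying that $\tilde{\nu}^t$ is the push-forward of $\nu$ under the $f(t)$-Lipschitz map $x\mapsto m^t_x$.
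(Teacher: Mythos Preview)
Your proof is correct and in fact more direct than the paper's argument. Both proofs begin by observing that $\tilde{\nu}^t=\tilde m^t_{\nu}$ via Proposition~\ref{inv} and uniqueness, and both eventually rely on the bound $W_p(m^t_x,\nu)\leq f(t)\,\Diam(X)$. The difference is in the overall structure: the paper first shows that $\{\tilde{\nu}^t\}_{t>0}$ is Cauchy in $\Pro_p(\Pro_p(X))$ by estimating $W_p(\tilde{\nu}^t,\tilde{\nu}^s)$ through the Kantorovich dual and the diagonal coupling $x\mapsto(m^t_x,m^s_x)$, and then identifies the limit as $\delta_{\nu}$ by a separate argument using (\ref{usc}) and Fatou's lemma together with the pointwise convergence $m^t_x\to\nu$. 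You instead compute $W_p(\tilde{\nu}^t,\delta_{\nu})$ directly via the unique coupling with a Dirac mass, which collapses the two steps into one line and makes the rate $f(t)\,\Diam(X)$ immediate. Your route is cleaner and avoids the duality and the semicontinuity/Fatou machinery entirely; the paper's approach, on the other hand, illustrates a technique (Cauchy plus limit identification) that would still work in situations where the target limit is not known in advance.
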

\begin{proof}
 Take $s>t>0$. Let $(\phi,\psi)\in\Phi_{W_p^p}(\tilde{\nu}^t,\tilde{\nu}^s)$. 
 By Proposition \ref{inv}, $\tilde{\nu}^t=\m^t_{\nu}$ for any $t>0$. 
 Since supp $\tilde{\nu}^t=m^t(\text{supp}\, \nu)$ and 
 supp $\tilde{\nu}^s=m^s(\text{supp}\, \nu)$, 
 it is clear that $\phi(m^t_x)+\psi(m^s_y)\leq W^p_p(m^t_x,m^s_y)$ holds 
 for $\nu$-a.e.~$x\in X$ and $\nu$-a.e.~$y\in X$. 
 Then we have 
 \begin{align}
  &\int_{\Pro_p(X)}\phi(\sigma)\, \tilde{\nu}^t(d\sigma)
  +\int_{\Pro_p(X)}\psi(\sigma)\, \tilde{\nu}^s(d\sigma)\notag\\
  &=\int_X\phi(m^t_x)\,\nu(dx)+\int_X\psi(m^s_x)\,\nu(dx)\notag\\
  &=\int_X\phi(m_x^t)+\psi(m_x^s)\, \nu(dx)\notag\\
  &\leq \int_X W^p_p(m_x^t, m_x^s)\, \nu(dx)\notag\\
  &\leq f(t)^p\int_X W^p_p(\delta_x, m_x^{s-t})\, \nu(dx)\notag\\
  &\leq \Diam (X)^pf(t)^p \rightarrow 0\quad \text{as} \quad t\rightarrow \infty.\notag
 \end{align}
 By the completeness of $\Pro_p(X)$ and the Kantorovich duality (\ref{Kd}), 
 we get $W_p(\tilde{\nu}^t,\tilde{\nu}^s)\rightarrow 0$ as $t\rightarrow \infty$. 
 Then $\{\tilde{\nu}^t\}_{t>0}$ is a convergent family. 
The limit measure of $\{\tilde{\nu}^t\}_{t>0}$ denotes by $\tilde{\nu}$. 
 Let $B$ be an arbitrary closed set in $\Pro_p(X)$ with $\nu\in B$. 
 By using Fatou's Lemma and (\ref{usc}), we have
 \begin{align}
  \tilde{\nu}(B)
  &\geq \limsup_{t\rightarrow \infty}\tilde{\nu}^t(B)\notag\\
  &\geq \liminf_{t\rightarrow \infty}\int_{\Pro_p(X)}\chi_B(\sigma)\, \tilde{\nu}^t(d\sigma)\notag\\
  &=\liminf_{t\rightarrow \infty}\int_X\chi_B(m^t_x)\, \nu(dx)\notag\\
  &\geq \int_X\liminf_{t\rightarrow \infty}\chi_B(m^t_x)\, \nu(dx)\notag\\
  &=1,\notag
 \end{align}
 which implies $\tilde{\nu}=\delta_{\nu}$. 
\end{proof}
\begin{rem}\label{convrate}
 By the proof of Proposition \ref{ratelem}, we see that $\tilde{\nu}^t$ 
 converges to $\delta_{\nu}$ with the same rate for $m^t_x$ converging to $\nu$. 
 Adding the assumption that $W_p(\delta_x,m_x^t)\rightarrow 0$ as $t\rightarrow 0$ uniformly, 
 it follows that the family $\{\tilde{\nu}^t\}_{t>0}$ is a continuous curve in $\Pro_p(\Pro_p(X))$. 
\end{rem}
\begin{rem}
Let $(M,g)$ be a compact Riemannian manifold with the heat kernel $\{p^t_x\}_{x\in X}$. 
Suppose that the Ricci curvature is bounded below by a constant $K>0$. 
Then the same conclusion as above holds, since $W_p(p^t_x,p^t_y)\leq e^{-Kt}d(x,y)$ holds 
for all $t\geq 0$, $x,y\in M$ and $1\leq p\leq\infty$ 
(see \cite{VS}). 
\end{rem}
\section{Coarse Ricci curvature and Gromov-Hausdorff topology}\label{convres1}
 We define the notion of \emph{the Gromov-Hausdorff convergence}.
 \begin{defn}\label{GHCONV}
  Let $X$ and $Y$ be two metric spaces. 
  We call a map $f : X\rightarrow Y$ an \emph{$\epsilon$-approximation map} 
  if the following two conditions are satisfied : 
  \begin{align}
   |d_Y(f(x),f(y))-d_X(x,y)|\leq\epsilon \quad \text{for any}\; x,y\in X,\label{almostisom}\\
   Y\subset B_{\epsilon}\left(f(X)\right):=\left\{\;y\in Y\; |\; d_Y(f(X),y)\leq\epsilon\;\right\}.\label{almostonto}
  \end{align}
  Let $\{X_n\}_{n\in\mathbb{N}}$ be a sequence of 
  compact metric spaces and $X$ a compact metric space. 
  We say that \emph{$X_n$ converges to $X$ as $n\rightarrow \infty$ 
  in the sense of the Gromov-Hausdorff topology} 
  if there exist a decreasing sequence of 
  positive numbers $\{\epsilon_n\}_{n\in\mathbb{N}}$ tending to $0$ 
  and maps $f_n : X_n\rightarrow X$ such that 
  $f_n$ is an $\epsilon_n$-approximation map for any $n$.  
 \end{defn}
\begin{rem}
 For any $\epsilon$-approximation map, there exists a Borel measurable 
 $\epsilon$-approximation map close to the original map.  
 Hence we always assume that an approximation map is Borel measurable in this paper. 
\end{rem} 
 \begin{rem}\label{GHanother}
 Let $X_n$, $n=1,2,\ldots$, and $X$ be compact metric spaces. 
 We see that $X_n$ converges to $X$ 
 in the sense of the Gromov-Hausdorff topology if and only if 
 there exist a compact metric space $Z$ and 
 isometric embeddings $\phi_n\,:\,X_n\rightarrow Z$, $\phi\,:\,X\rightarrow Z$ such that 
 $d_H(\phi_n(X_n),\phi(X))\rightarrow 0$ as $n\rightarrow \infty$, where $d_H$ is 
 the Hausdorff distance on $Z$ (see \cite{BBI}). 
 \end{rem}
\begin{rem}\label{approrem}
  Let $(X,d_X),(Y,d_Y)$ be two metric spaces and 
  $f$ an $\epsilon$-approximation map from $X$ to $Y$. 
  Then there exists a $3\epsilon$-approximation map $f'$ from $Y$ to $X$ such that 
  \begin{align}
  d_Y(y, f(f'(y)))\leq \epsilon\label{eqinrem}
  \end{align}
  and 
  \begin{align}
   d_X(x, f'(f(x)))\leq 2\epsilon\label{eq2inrem}
  \end{align}
  holds (see \cite{LV}). 
 \end{rem}
 A metric space $X$ is said to be \emph{proper} if any bounded closed set in $X$ is compact. 
 It is known that a locally compact complete geodesic metric space is proper. 
 In this paper, we denote by $B_r(x)$ the closed ball centered at $x$ and of radius $r$. 
 We define the \emph{pointed Gromov-Hausdorff convergence} of proper metric spaces. 
 \begin{defn}[cf.\!\! \cite{BBI}*{Definition 8.1.1}]\label{ptGH}
  Let $\{(X_n,\ast_n)\}_{n\in\mathbb{N}}$ be a sequence of proper pointed metric spaces and 
  $(X,\ast)$ a proper pointed metric space. 
  We say that $(X_n,\ast_n)$ converges to $(X,\ast)$ as $n\rightarrow \infty$ 
  in the \emph{pointed Gromov-Hausdorff topology} if 
  for any $R>0$, there exist a sequence $\epsilon_n\rightarrow 0$ and pointed 
  $\epsilon_n$-approximation maps $f_n\,:\,B_{R+\epsilon_n}(\ast_n)\rightarrow B_R(\ast)$, 
  where a pointed approximation map means an approximation map with $f_n(\ast_n)=\ast$. 
 \end{defn}
 Let $x\in X$ and $x_n\in X_n$, $n=1,2\ldots$, be points. We say that $x_n$ \emph{converges to} $x$ 
 as $n\rightarrow\infty$ if 
  $X_n\rightarrow X$ in the pointed Gromov-Hausdorff topology and if $f_n(x_n)\rightarrow x$ in $X$, 
  where $f_n$ is an $\epsilon_n$-approximation map with $\epsilon_n\rightarrow 0$. 
 \begin{rem}
  Combining Definition \ref{ptGH} and the assumption that 
  $X_n$ and $X$ are all geodesic metric spaces 
  implies the following condition (see \cite{BBI}*{Exercise 8.1.4}).  
   There exist a sequence $R_l\rightarrow\infty$, a sequence 
   $\epsilon_n\rightarrow 0$ and the pointed $\epsilon_n$-approximation map  
   $f^l_n\,:\,B_{R_l}(\ast_n)\rightarrow B_{R_l}(\ast)$.
 \end{rem}
 We also define the \emph{local Gromov-Hausdorff convergence}, 
 which is a notion of the convergence of a family of metric spaces 
 being even not locally compact. 
 \begin{defn}[\cite{V2}*{Definition 27.11}]
  Let $\{X_n\}_{n\in\mathbb{N}}$ be a family of geodesic Polish spaces and $X$ a Polish space. 
  We say that $X_n$ converges to $X$ in the \emph{local Gromov-Hausdorff topology} if 
  there exist nondecreasing sequences of compact sets $\{K^{(l)}_n\}_{l\in\mathbb{N}}$ in each $X_n$ 
  and $\{K^{(l)}\}_{l\in \mathbb{N}}$ in $X$ such that the following 
  two conditions are satisfied: 
  \begin{enumerate}
   \item $\bigcup K^{(l)}$ is dense in $X$; 
   \item For each fixed $l\in\mathbb{N}$, $K^{(l)}_n$ converges to $K^{(l)}$ in the Gromov-Hausdorff sense as 
   $n\rightarrow\infty$. 
  \end{enumerate}
 \end{defn} 
 \begin{prop}\label{GHconv}
  Let $(X_n,\ast_n),\,n=1,2\ldots$ and $(X,\ast)$ be locally compact pointed Polish geodesic 
  metric spaces. 
  Assume that $(X_n,\ast_n)$ converges to $(X,\ast)$ in the pointed Gromov-Hausdorff topology. Then 
  $\Pro_p(X_n)$ converges to $\Pro_p(X)$ in the local Gromov-Hausdorff topology. 
 \end{prop}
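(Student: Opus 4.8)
The plan is to realize the compact sets required by the definition of local Gromov--Hausdorff convergence as Wasserstein spaces over large balls, and to deduce their level-wise convergence from that of the balls themselves. Since $X_n$ and $X$ are locally compact, complete and geodesic, they are proper, so every closed ball below is compact. Using the pointed Gromov--Hausdorff convergence together with the geodesic assumption (the consequence noted in the remark following Definition \ref{ptGH}), I fix a nondecreasing sequence $R_l\uparrow\infty$, a sequence $\epsilon_n\to 0$, and pointed $\epsilon_n$-approximation maps $f^l_n\colon B_{R_l}(\ast_n)\to B_{R_l}(\ast)$; in particular, for each fixed $l$ the compact ball $B_{R_l}(\ast_n)$ converges to $B_{R_l}(\ast)$ in the Gromov--Hausdorff sense as $n\to\infty$. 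I then set
\[
K^{(l)}_n:=\Pro(B_{R_l}(\ast_n))\subset\Pro_p(X_n),\qquad
K^{(l)}:=\Pro(B_{R_l}(\ast))\subset\Pro_p(X),
\]
the probability measures supported in the corresponding ball (these lie in the $L^p$-Wasserstein spaces, their $p$-th moments being bounded by $R_l^p$). Each is compact: $\Pro(B_{R_l}(\ast))$ is weakly compact by Proposition \ref{proh}, and on it the weak and $W_p$ topologies coincide by Theorem \ref{equiv} since $d(\ast,\cdot)^p$ is bounded continuous there. The sequences are nondecreasing because $R_l$ is, and, $B_{R_l}(\ast)$ being closed, any coupling between two measures supported in it is supported in $B_{R_l}(\ast)\times B_{R_l}(\ast)$, so the $W_p$ inherited from $\Pro_p(X)$ coincides with the intrinsic Wasserstein metric of $B_{R_l}(\ast)$ (and likewise for $X_n$).

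Next I check that $\bigcup_l K^{(l)}$ is dense in $\Pro_p(X)$. Given $\mu\in\Pro_p(X)$, for $R$ large enough that $\mu(B_R(\ast))>0$ put $\mu_R:=\mu|_{B_R(\ast)}/\mu(B_R(\ast))$. As $R\to\infty$ one has $\mu_R\to\mu$ weakly and $\int d(\ast,x)^p\,\mu_R(dx)\to\int d(\ast,x)^p\,\mu(dx)$ by dominated convergence, hence $W_p(\mu_R,\mu)\to0$ by Theorem \ref{equiv}; since $\mu_R$ is supported in $B_R(\ast)\subset B_{R_l}(\ast)$ whenever $R_l\ge R$, density follows.

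It remains to prove $K^{(l)}_n\to K^{(l)}$ in the Gromov--Hausdorff sense for each fixed $l$, and this passage from convergence of the balls to convergence of their Wasserstein spaces is where I expect the real work. I would argue via Remark \ref{GHanother}: from $B_{R_l}(\ast_n)\to B_{R_l}(\ast)$ I realize all these compact spaces inside one compact metric space $Z_l$ by isometric embeddings with $d_H(B_{R_l}(\ast_n),B_{R_l}(\ast))\to 0$. Then $\Pro_p(Z_l)=\Pro(Z_l)$ carries $W_p$, the sets $K^{(l)}_n$ and $K^{(l)}$ embed isometrically into it (again because $W_p$ is unchanged on closed subsets), and the key estimate is
\[
d_H(\Pro(A_n),\Pro(A))\le d_H(A_n,A)\qquad\text{in }(\Pro(Z_l),W_p)
\]
for compact $A_n,A\subset Z_l$: if $A_n\subset B_\delta(A)$, a measurable selection gives a Borel map $T\colon A_n\to A$ with $d(x,T(x))\le\delta$, and the coupling $(\mathrm{id},T)_{\ast}\mu$ shows $W_p(\mu,T_{\ast}\mu)\le\delta$ with $T_{\ast}\mu\in\Pro(A)$, and symmetrically. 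Applying this with $\delta=d_H(B_{R_l}(\ast_n),B_{R_l}(\ast))\to 0$ and invoking the converse direction of Remark \ref{GHanother} gives $K^{(l)}_n\to K^{(l)}$ in the Gromov--Hausdorff sense. (Alternatively one can work directly with the pushforwards $(f^l_n)_{\ast}$ and the approximate inverses of Remark \ref{approrem}, using the $L^p$-triangle inequality for couplings to check that $(f^l_n)_{\ast}$ is an $O(\epsilon_n)$-approximation map from $K^{(l)}_n$ onto $K^{(l)}$ up to $O(\epsilon_n)$; the ambient-space route is cleaner.) With compactness, monotonicity, density of $\bigcup_l K^{(l)}$ in $\Pro_p(X)$, and level-wise Gromov--Hausdorff convergence all established, the definition of local Gromov--Hausdorff convergence is satisfied, so $\Pro_p(X_n)\to\Pro_p(X)$ in the local Gromov--Hausdorff topology.
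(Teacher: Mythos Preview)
Your proof is correct and shares the paper's setup---the same compact exhaustion $K^{(l)}=\Pro(B_{R_l}(\ast))$, $K^{(l)}_n=\Pro(B_{R_l}(\ast_n))$, the same truncation argument for density via Theorem \ref{equiv}---but diverges from the paper in the proof that $K^{(l)}_n\to K^{(l)}$ in Gromov--Hausdorff. The paper works directly with the $\epsilon$-approximation map $f$ and its approximate inverse $f'$ from Remark \ref{approrem}, and verifies that $f_\ast$ is an approximation map by splitting the integral $\int d(f(x_1),f(x_2))^p\,d\pi$ over the sets $\{d(x_1,x_2)\geq \epsilon^{1/2}R_l/2\}$ and its complement; this yields an $O(\epsilon^{1/2p})$-approximation. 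Your route through Remark \ref{GHanother}---isometrically embedding the balls into a common compact $Z_l$, then bounding the Hausdorff distance between $\Pro(A_n)$ and $\Pro(A)$ in $(\Pro(Z_l),W_p)$ by $d_H(A_n,A)$ via a measurable near-projection $T$---is more conceptual, avoids the splitting computation, and in fact gives the sharper $O(\epsilon)$ rather than $O(\epsilon^{1/2p})$ approximation. The paper's approach, on the other hand, makes the approximation map $(f^l_n)_\ast$ completely explicit, which is convenient for the later use in the proof of Theorem \ref{GHtop}. (One small remark: your parenthetical alternative claims $(f^l_n)_\ast$ is an $O(\epsilon_n)$-approximation; the paper's computation only gives $O(\epsilon_n^{1/2p})$, and without the ambient-space trick the additive distortion $|d(f(x),f(y))-d(x,y)|\le\epsilon$ does not straightforwardly yield an $O(\epsilon)$ bound on $|W_p(f_\ast\mu,f_\ast\nu)-W_p(\mu,\nu)|$.)
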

 \begin{proof}
  Proposition \ref{GHconv} is proved in a similar way as in Theorem 28.13 in \cite{V2}. 
  For the completeness we show the detail of the proof. 
  Let $R_l\rightarrow \infty$ be an increasing sequence of positive numbers. 
  Without loss of generality, we assume $R_l>2$ for any $l\in\mathbb{N}$. 
  Define
  \begin{align}
   K^{(l)}:=\Pro_p(B_{R_l}(\ast))\subset \Pro_p(X),\notag\\
   K^{(l)}_n:=\Pro_p(B_{R_l}(\ast_n))\subset \Pro_p(X_n). \notag
  \end{align} 
  Since $B_{R_l}(\ast)$ is compact, so is $K^{(l)}$. 
  We take any $\mu\in\Pro_p(X)$ and fix it. Take $l_0>0$ with 
  $\mu(B_{R_{l_0}}(\ast))\geq 1/2$. 
  Set $\mu_l=\chi_{B_{R_l}(\ast)}\mu/\mu(B_{R_l}(\ast))$ for $l\geq l_0$, 
  where $\chi_A$ is the characteristic function of $A\subset X$. 
  For any $f\in\mathcal{C}_b(X)$, we have 
  \begin{align}
  &\left|\int_Xf\,d\mu_l-\int_Xf\,d\mu\right|
  =\left|\frac{1}{\mu(B_{R_l}(\ast))}\int_{B_{R_l}(\ast)}f\,d\mu-\int_Xf\,d\mu\right|\notag\\
  &\leq \left(\frac{1}{\mu(B_{R_l}(\ast))}-1\right)\left|\int_Xf\,d\mu\right|
  +\frac{1}{\mu(B_{R_l}(\ast))}\int_{X\setminus B_{R_l}(\ast)}|f|\,d\mu\notag\\
  &\leq 4\mathrm{sup}_{x\in X}|f|\mu(X\setminus B_{R_l}(\ast))
  \rightarrow 0\quad \text{as}\;l\rightarrow \infty.\notag
  \end{align}
  This implies that $\mu_l\rightarrow \mu$ weakly. 
  Let us prove that, for any $\epsilon>0$, there exists $l_1\in\mathbb{N}$ such that 
  \begin{align}
   \int_Xd(\ast,x)^p\,\mu_l(dx)\leq \int_Xd(\ast,x)^p\,\mu(dx)+\epsilon\notag
  \end{align}
  holds whenever $l\geq l_1$. 
  Indeed, let $M:=\int_Xd(\ast,x)^p\,\mu(dx)$ and let $\eta$ be a real number such that 
  $0<\eta<\epsilon/M$. 
  Take $l_1\in\mathbb{N}$ such that $1/\mu(B_{R_{l_1}}(\ast))\leq 1+\eta$. 
  For any $l\geq l_1$, we have 
  \begin{align}
   &\int_Xd(\ast,x)^p\,\mu_l(dx)=\frac{1}{\mu(B_{R_l}(\ast))}\int_{B_{R_l}(\ast)}d(\ast,x)^p\mu(dx)\notag\\
   &\leq \frac{1}{\mu(B_{R_l}(\ast))}\int_Xd(\ast,x)^p\,\mu(dx)\leq (1+\eta)\int_Xd(\ast,x)^p\,\mu(dx)\notag\\
   &<\int_Xd(\ast,x)^p\,\mu(dx)+\epsilon. \notag
  \end{align} 
  This means that the condition (\ref{Wpequiv}) in Theorem \ref{equiv} holds. 
  Therefore, $W_p(\mu_l,\mu)\rightarrow 0$ as $l\rightarrow \infty$, 
  so that $\cup K^{(l)}$ is dense in $\Pro_p(X)$. 
  It suffices to prove that $K_n^{(l)}\rightarrow K^{(l)}$ in the Gromov-Hausdorff sense. 
  For given $0<\epsilon<1/10$, there exist an $\epsilon$-approximation map 
  $f\,:\,B_{R_l}(\ast_n)\rightarrow B_{R_l}(\ast)$ and a $3\epsilon$-approximation map 
  $f'\,:\,B_{R_l}(\ast)\rightarrow B_{R_l}(\ast_n)$ as in Remark \ref{approrem} for sufficiently large $n$. 
  Let $\mu,\nu\in K^{(l)}_n$. For an optimal coupling $\pi_1\in\Pi(\mu,\nu)$ between 
  $\mu$ and $\nu\in K_n^{(l)}$, the push forward measure $\pi_2:=(f\times f)_{\ast}\pi_1$ 
  is a coupling between 
  $f_{\ast}\mu$ and $f_{\ast}\nu\in K^{(l)}$. 
  Let 
  \begin{align}
  A&:=\{\,(x,y)\in B_{R_l}(\ast_n)\times B_{R_l}(\ast_n)\,;\,d(x,y)\geq \epsilon^{1/2} 
  R_l/2\,\},\notag\\
  B&:=B_{R_l}(\ast_n)\times B_{R_l}(\ast_n)\setminus A.\notag
  \end{align}
  We get 
  \begin{align}
   &W_p(f_{\ast}\mu,f_{\ast}\nu)^p\notag\\
   &\leq \int_{B_{R_{l}}(\ast)\times B_{R_{l}}(\ast)}d(y_1,y_2)^p\,\pi_2(dy_1,dy_2)\notag\\
   &=\int_{B_{R_{l}}(\ast_n)\times B_{R_{l}}(\ast_n)}d(f(x_1),f(x_2))^p\,\pi_1(dx_1,dx_2)\notag\\
   &=\int_Ad(f(x_1),f(x_2))^p\,\pi_1(dx_1,dx_2)+\int_Bd(f(x_1),f(x_2))^p\,\pi_1(dx_1,dx_2)\notag\\
   &\leq\int_A(d(x_1,x_2)+\epsilon)^p\,\pi_1(dx_1,dx_2)+
   \int_B(d(x_1,x_2)+\epsilon)^p\,\pi_1(dx_1,dx_2) \notag\\
   &=\int_Ad(x_1,x_2)^p(1+\epsilon/d(x_1,x_2))^p\,\pi_1(dx_1,dx_2)+
   \int_B(d(x_1,x_2)+\epsilon)^p\,\pi_1(dx_1,dx_2) \notag\\
   &\leq\int_Ad(x_1,x_2)^p(1+p\epsilon^{1/2}/R_l)\,\pi_1(dx_1,dx_2)+
   \int_B\epsilon^{p/2}\left(\frac{R_l}{2}+\epsilon^{1/2}\right)^p\,\pi_1(dx_1,dx_2)\notag\\
   &\leq W_p(\mu,\nu)^p+O(\epsilon^{1/2}).\notag  
  \end{align} 
  Then we have 
  \begin{align}
   W_p(f_{\ast}\mu,f_{\ast}\nu)\leq W_p(\mu,\nu)+O(\epsilon^{1/2p}).\label{eq1inprop}
  \end{align}
  The same argument leads to
  \begin{align}
   W_p(f'_{\ast}(f_{\ast}\mu),f'_{\ast}(f_{\ast}\nu))\leq W_p(f_{\ast}\mu,f_{\ast}\nu)+
   O(\epsilon^{1/2p}).\label{eq2inprop}
  \end{align}
  Using (\ref{eq2inrem}), we also get 
  \begin{align}
   W_p((f'\circ f)_{\ast}\mu,\mu)&\leq
   \left\{\int_{B_{R_{l}}(\ast_n)}d(f'(f(x)),x)^p\,\mu(dx)\right\}^{1/p}\notag\\
   &\leq 2\epsilon\notag
  \end{align}
  and 
  \begin{align}
   W_p((f'\circ f)_{\ast}\nu,\nu)\leq 2\epsilon.\notag
  \end{align}
  By the triangle inequality and (\ref{eq2inprop}), we get
  \begin{align}
   W_p(\mu,\nu)\leq W_p(f_{\ast}\mu,f_{\ast}\nu)+O(\epsilon^{1/2p}).\label{eq3inprop}
  \end{align}
  The inequalities (\ref{eq1inprop}) and (\ref{eq3inprop}) imply the condition (\ref{almostisom}) 
  for $f_{\ast} : K_n^{(l)}\rightarrow K^{(l)}$. 
  Moreover by (\ref{eqinrem}),
  \begin{align}
   W_p(\sigma, (f\circ f')_{\ast}\sigma)\leq\epsilon\label{eq4inprop}
  \end{align}
  holds for any $\sigma\in K^{(l)}$. The inequality (\ref{eq4inprop}) implies the condition 
  (\ref{almostonto}) for $f_{\ast} : K_n^{(l)}\rightarrow K^{(l)}$. 
  Then $f_{\ast}\,:\,K^{(l)}_n\rightarrow K^{(l)}$ is an 
  $O(\epsilon^{1/2p})$-approximation map. This completes the proof of 
  Proposition \ref{GHconv}.  
  \end{proof} 
  We define a notion of convergence of maps. 
  We omit the base point of a pointed metric space if there is no confusion. 
\begin{defn}\label{convasmap}
 Let $X,Y,X_n,Y_n,\,n=1,2,\ldots$, be proper pointed metric spaces 
 and $\phi_n\,:X_n\rightarrow Y_n$ maps. 
 Suppose that $X_n\rightarrow X$ and $Y_n\rightarrow Y$ in the pointed 
 Gromov-Hausdorff topology. 
 Let $f_n:X_n\rightarrow X,\, g_n:Y_n\rightarrow Y$ be approximation maps. 
 We say that the sequence of maps $\{\phi_n\}_n$ converges to $\phi\,:\,X\rightarrow Y$ if 
 for any sequence $\{x_n\}_{n\in \mathbb{N}}$ with $f_n(x_n)\rightarrow x$, we have 
 $g_n(\phi_n(x_n))\rightarrow \phi(x)$. 
\end{defn}
  \begin{proof}[Proof of Theorem \ref{GHtop}]
   We use the same notation as in the proof of Proposition \ref{GHconv}. 
   We assume $R_{l+1}-R_l>1$ for any $l\geq 1$ without loss of generality. 
   Let $X_n\ni x_n\rightarrow x\in X$ as $n\rightarrow \infty$ and let 
   $f_{j,n}\,:\,B_{R_j}(\ast_n)\rightarrow B_{R_j}(\ast)$ be an  
   $\epsilon_n$-approximation map for a sequence of real numbers $0<\epsilon_n<1$ tending to $0$. 
   For any $\epsilon>0$, there exist compact subsets $L_{\epsilon}^n\subset X_n$ with 
   $x_n\in L^n_{\epsilon}$ such that 
   $m^n_{x_n}(X_n\setminus L^n_{\epsilon})\leq\epsilon$ and 
   $\sup_n\Diam\, L^n_{\epsilon}=r_{\epsilon}<\infty$ by the assumption of Theorem \ref{GHtop}. 
   Then $L_{\epsilon}^n\subset B_{r_{\epsilon}+d(x_n,\ast_n)}(\ast_n)\subset 
   B_{r_{\epsilon}+d(x,\ast)+1}(\ast_n)$ holds for sufficiently large $n$. 
   Let $\{\mu^n_j\in K^{(j)}_n\}_{j}$ be an approximation of $m^n_{x_n}$ 
   as in Proposition \ref{GHconv}, that is, 
   $\mu_j^n=\chi_{B_{R_j}(\ast_n)}m^n_{x_n}/m^n_{x_n}(B_{R_j(\ast_n)})$. 
   By taking $i,j$ such that $R_j>R_i\geq r_{\epsilon}+d(x,\ast)+1$, 
   we get $L^{n}_{\epsilon}\subset B_{R_i}(\ast_n)\subset B_{R_j}(\ast_n)$. 
   Then we have 
   \begin{align}
    (f_{j,n})_{\ast}\mu^n_j(X\setminus B_{R_{i+1}}(\ast))
    &\leq\mu^n_j(B_{R_j}(\ast_n)\setminus B_{R_i}(\ast_n))\notag\\
    &=1-\mu^n_j(B_{R_i}(\ast_n))\notag\\
    &\leq 1-\mu^n_j(L^n_{\epsilon})\notag\\
    &= 1-\frac{1}{m^n_{x_n}(B_{R_j}(\ast_n))}
    m^n_{x_n}(B_{R_j}(\ast_n)\cap L^n_{\epsilon})\notag\\
    &\leq 1-m^n_{x_n}(L^n_{\epsilon})\notag\\
    &=m^n_{x_n}(X\setminus L^n_{\epsilon})\notag\\
    &\leq \epsilon. \notag
   \end{align}
   This means that the family of probability measures 
   $\{(f_{j,n})_{\ast}\mu_j^n\}_{n\in\mathbb{N}}$ is tight. 
   Set $f_n:=f^n_{j,n}$ for simplicity. 
   By extracting a subsequence of $\{(f_n)_{\ast}\mu_j^n\}_{n}$ 
   (we denote it by $k:=n_k$), we have a probability measure 
   $\mu_j(x)\in K^{(j)}$ with $(f_k)_{\ast}\mu^k_j\rightarrow \mu_j(x)$ 
   weakly by using Proposition \ref{proh}. 
   We may take a sufficiently large $j$ 
   that satisfies $1/m^k_{x_k}(B_{R_j}(\ast_k))\leq 2$. 
   Since the diameter of $L^k_{\epsilon}\subset B_{r_{\epsilon}+d(x,\ast)+1}(\ast_k)$ is 
   independent of $k$, 
   we are able to take such $j$ being independent of the choice of $k$ by $(2)$ of the assumption 
   of the theorem.
   Then, for given $R>0$, we have 
   \begin{align}
    &\int_{d(\ast,y)\geq R}d(\ast,y)^p\,(f_k)_{\ast}\mu_j^k(dy)
    \leq\int_{d(f_k(\ast_k),f_k(y))\geq R-\epsilon_k}d(f_k(\ast_k),f_k(y))^p\,\mu^k_j(dy)\notag\\
    &=\frac{1}{m^k_{x_k}(B_{R_j}(\ast_k))}\int_{R-\epsilon_k\leq d(f_k(\ast_k),f_k(y))\leq R_j+\epsilon_k}
    d(f_k(\ast_k),f_k(y))^p\,m^k_{x_k}(dy)\notag\\
    &\leq 2\int_{R-\epsilon_k\leq d(f_k(\ast_k),f_k(y))\leq R_j+\epsilon_k}
    d(f_k(\ast_k),f_k(y))^p\,m^k_{x_k}(dy)\notag\\
    &\leq 2\int_{R-2\epsilon_k\leq d(\ast_k,y)}
    (d(\ast_k,y)+\epsilon_k)^p\,m^k_{x_k}(dy).\notag
   \end{align}
   Hence by $(3)$ of the assumption of the theorem, 
   \begin{align}
    \lim_{R\rightarrow\infty}\limsup_{k\rightarrow\infty}
    \int_{d(\ast,y)\geq R}d(\ast,y)^p\,(f_k)_{\ast}\mu^k_j(dy)
    =0\label{Wpconv}
   \end{align}
   holds. We conclude $W_p(\mu_j(x),(f_k)_{\ast}\mu_j^k)\rightarrow 0$ by Theorem \ref{equiv}. 
   By (\ref{lsc}), we get 
   \begin{align}
    \mu_j(x)(X\setminus B_{R_{i+1}}(\ast))
    &\leq\liminf_{k\rightarrow\infty}(f_k)_{\ast}\mu^k_j(X\setminus B_{R_{i+1}}(\ast))\notag\\
    &\leq\liminf_{k\rightarrow\infty}\mu^k_j(X_k\setminus B_{R_i}(\ast_k))
    \leq 2\epsilon.\notag
   \end{align}
   Since the sequence $\{\mu_j(x)\}_j$ is tight, 
   there exists a probability measure $\mu_x\in\Pro(X)$ such that, by extracting subsequence, 
   $\mu_j(x)\rightarrow \mu_x$ weakly by Proposition \ref{proh}. 
   A similar argument of (\ref{Wpconv}) leads to 
   $W_p(\mu_j(x),\mu_x)\rightarrow 0$. 
   For the dense subset $D:=\{x^i\}_i$ of $X$, we are able to take families of dense subset $\{x^i_k\}_i$ 
   of $X_k$ such that $f_k(x^i_k)\rightarrow x^i$. By using the above argument, we define the map 
   $\mu\,:\,D\ni x^j\rightarrow \mu_{x^j}\in\Pro_p(X)$. 
   Take $x,y\in D$. Let $x_k,y_k\in X_k$ be convergence sequences such that  
   $x_k\rightarrow x$ and $y_k\rightarrow y$. 
   For any $\epsilon>0$, 
   there exist sufficiently large $i_x,i_y\in\mathbb{N}$ such that 
   \begin{align}
    W_p(\mu^{x_k}_i,m^k_{x_k})\leq \epsilon\quad\text{for any}\,i\geq i_x,\notag\\
    W_p(\mu^{y_k}_i,m^k_{x_k})\leq \epsilon\quad\text{for any}\,i\geq i_y,\notag
   \end{align} 
    where $\mu_i^{x_k}$ is an approximation measure for $m^k_{x_k}$ as in Proposition \ref{GHconv} 
    and we are able to take both $i_x,i_y$ independent of $k$. 
    Then we have 
    \begin{align}
     &W_p(\mu_x,\mu_y)=\lim_{i\rightarrow\infty}W_p(\mu_i(x),\mu_i(y))\notag\\
     &=\lim_{i\rightarrow\infty}\lim_{k\rightarrow\infty}
     W_p((f_k)_{\ast}\mu^{x_k}_i,(f_k)_{\ast}\mu^{y_k}_i)\notag\\
     &\leq \lim_{i\rightarrow\infty}\limsup_{k\rightarrow\infty}W_p(\mu^{x_k}_i,\mu^{y_k}_i)\notag\\
     &\leq \lim_{i\rightarrow\infty}\limsup_{k\rightarrow\infty}
     \left(W_p(\mu^{x_k}_i,m^k_{x_k})+W_p(m_{x_k}^k,m_{y_k}^k)+
     W_p(m^k_{y_k},\mu^{y_k}_i)\right)\notag\\
     &\leq \lim_{i\rightarrow\infty}\limsup_{k\rightarrow\infty}
     \left(W_p(\mu^{x_k}_i,m^k_{x_k})+(1-\kappa_0)d(x_k,y_k)+
     W_p(m^k_{y_k},\mu^{y_k}_i)\right)\notag\\
     &\leq (1-\kappa_0)d(x,y).\notag
    \end{align}
    We are able to define a map $m\,:\,X\rightarrow \Pro_p(X)$ that is a continuous extension of 
    the map $\mu$. This completes the proof of 
    Theorem \ref{GHtop}.
  \end{proof}
Ollivier defined a notion of the Gromov-Hausdorff convergence with random walks. 
\begin{defn}[cf. \cite{O}*{Definition 55}]
 Let $(X,\{m_x\}_{x\in X})$ and $(X^n,\{m^n_x\}_{x\in X^n})$, $n=1,2,\ldots$, be compact 
 metric spaces with 
 random walks. We say that 
 $(X^n,\{m_x^n\}_{x\in X^n})$ \emph{converges to} $(X,\{m_x\}_{x\in X})$ 
 if for any $\epsilon>0$, there exists $N_{\epsilon}\in\mathbb{N}$ such that 
 the following conditions (1) and (2) are satisfied: 
 \begin{enumerate}  
 \item There exist a compact metric space $(Z,d)$ and isometric embeddings 
 $\phi_n\,:\,X^n\rightarrow Z$, $\phi\,:\,X\rightarrow Z$.  
 \item For any $x\in X$ there exists $x_n\in X^n$ with $d(\phi_n(x_n),\phi(x))\leq \epsilon$ such that 
 $W_p((\phi_n)_{\ast}m^n_{x_n},(\phi)_{\ast}m_x)\leq 2\epsilon$ whenever $n\geq N_{\epsilon}$, 
 and likewise $x_n\in X^n$. 
 \end{enumerate}
\end{defn}
It is clear that $X^n$ Gromov-Hausdorff converges to $X$ if 
$(X^n,\{m^n_x\}_x)$ converges to $(X,\{m_x\}_x)$ 
as long as $X^n$ and $X$ are compact. 
\begin{prop}\label{OGH} 
$(X^n,\{m^n_x\}_{x\in X^n})$ converges to $(X,\{m_x\}_{x\in X})$ 
is equivalent to  
$m^n\rightarrow m$ as a map provided that the $p$-coarse Ricci curvature 
is bounded below uniformly, and $X^n$, $X$ are compact. 
\end{prop}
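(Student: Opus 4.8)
The plan is to establish the two implications separately, after fixing a common realization of the Gromov--Hausdorff convergence. By Remark \ref{GHanother} there is a compact metric space $(Z,d)$ and isometric embeddings $\phi_n\colon X^n\to Z$, $\phi\colon X\to Z$ with $\delta_n:=d_H(\phi_n(X^n),\phi(X))\to 0$ (in the ``Ollivier convergence'' direction such a realization is furnished directly by the hypothesis, since condition (2) of Ollivier's definition already forces $\delta_n\to 0$); fix approximation maps $f_n\colon X^n\to X$ compatible with the embeddings, so that $d_Z(\phi(f_n(y)),\phi_n(y))\le\delta_n$ for all $y\in X^n$, and, by the compact case of Proposition \ref{GHconv}, take $g_n:=(f_n)_\ast\colon\Pro_p(X^n)\to\Pro_p(X)$ as the approximation maps in Definition \ref{convasmap}. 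Let $\kappa_0$ be the uniform lower bound of the $p$-coarse Ricci curvature, so that $m$ and all the $m^n$ are $(1-\kappa_0)$-Lipschitz (see the discussion before Proposition \ref{convo}). The one routine estimate used repeatedly is
\[
W_p\big(\phi_\ast(f_n)_\ast\mu,(\phi_n)_\ast\mu\big)\le\delta_n\qquad(\mu\in\Pro_p(X^n)),
\]
obtained by pushing the diagonal self-coupling of $\mu$ forward under $y\mapsto(\phi(f_n(y)),\phi_n(y))$; recall also that $W_p$ on $\Pro_p(X)$ coincides with $W_p$ on $\Pro_p(Z)$ via $\phi_\ast$ (and likewise via $\phi_n$).

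For ``Ollivier convergence $\Rightarrow$ convergence as a map'' I would take $x_n\in X^n$ with $f_n(x_n)\to x$ and show $(f_n)_\ast m^n_{x_n}\to m_x$. Fix $\epsilon>0$. The ``likewise'' clause of condition (2) gives, for $n\ge N_\epsilon$, a point $y_n\in X$ with $d_Z(\phi_n(x_n),\phi(y_n))\le\epsilon$ and $W_p\big((\phi_n)_\ast m^n_{x_n},\phi_\ast m_{y_n}\big)\le 2\epsilon$; combined with $d_Z(\phi(f_n(x_n)),\phi_n(x_n))\le\delta_n$ this forces $d_X(y_n,f_n(x_n))\le\epsilon+\delta_n$. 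Now $\phi_\ast$-isometry, the triangle inequality in $\Pro_p(Z)$, the routine estimate and the Lipschitz bound for $m$ give
\[
W_p\big((f_n)_\ast m^n_{x_n},m_x\big)\le\delta_n+2\epsilon+(1-\kappa_0)\big(\epsilon+\delta_n+d_X(f_n(x_n),x)\big);
\]
letting $n\to\infty$ and then $\epsilon\to 0$ yields the claim.

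For the converse I would argue by contradiction, to promote the sequential hypothesis to Ollivier's uniform one. Condition (1) of Ollivier's definition is satisfied by the fixed embeddings. Suppose the first half of condition (2) failed for some $\epsilon_0>0$: along a subsequence $n_k$ there are $x^{(k)}\in X$ so that every $y\in X^{n_k}$ with $d_Z(\phi_{n_k}(y),\phi(x^{(k)}))\le\epsilon_0$ satisfies $W_p\big((\phi_{n_k})_\ast m^{n_k}_y,\phi_\ast m_{x^{(k)}}\big)>2\epsilon_0$. Passing to a further subsequence, $x^{(k)}\to x^\ast\in X$ by compactness of $X$; pick $y_k\in X^{n_k}$ with $d_Z(\phi_{n_k}(y_k),\phi(x^{(k)}))\le\delta_{n_k}$, which is admissible for $\delta_{n_k}<\epsilon_0$ and satisfies $d_X(f_{n_k}(y_k),x^{(k)})\le 2\delta_{n_k}\to 0$, hence $f_{n_k}(y_k)\to x^\ast$. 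Convergence as a map, in the evident subsequential form (splice the $y_k$ into a full sequence whose $f_n$-images converge to $x^\ast$, which is possible since $\delta_n\to0$), then yields $W_p\big((f_{n_k})_\ast m^{n_k}_{y_k},m_{x^\ast}\big)\to 0$; together with the routine estimate and the Lipschitz bound for $m$ this forces $W_p\big((\phi_{n_k})_\ast m^{n_k}_{y_k},\phi_\ast m_{x^{(k)}}\big)\to 0$, contradicting the strict inequality above. The second half of condition (2) (``likewise $x_n\in X^n$'') is handled the same way: a failing subsequence gives $y_k\in X^{n_k}$, one passes to a subsequence with $f_{n_k}(y_k)\to x^\ast$, notes that $x=f_{n_k}(y_k)$ is itself admissible for large $k$, and reaches the same contradiction.

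I expect the converse to be the main obstacle: ``$m^n\to m$ as a map'' only constrains convergent sequences, whereas Ollivier's condition is an $\epsilon$--$N_\epsilon$ requirement uniform over all points of $X$ (and of the $X^n$), and the compactness-and-contradiction argument above is exactly what bridges the two; compactness of the $X^n$ and $X$ is used only there and in realizing the Gromov--Hausdorff convergence inside a single $Z$.
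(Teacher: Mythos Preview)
Your argument is correct and uses the same scaffolding as the paper: realize everything isometrically in a single compact $(Z,d)$, use the ``diagonal'' estimate $W_p(\phi_\ast(f_n)_\ast\mu,(\phi_n)_\ast\mu)\le\delta_n$, and exploit the uniform $(1-\kappa_0)$-Lipschitz bound. The differences are worth noting. For the forward implication the paper applies the \emph{first} half of Ollivier's condition (2) to the limit point $x$---obtaining $x'_n\in X^n$ near $x$ with $W_p(\phi_\ast m_x,(\phi_n)_\ast m^n_{x'_n})\le 2\epsilon$---and then closes with the Lipschitz property of $m^n$; you instead apply the ``likewise'' half to the moving points $x_n$ and close with the Lipschitz property of the limit map $m$. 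These are symmetric and equally valid. For the converse the paper simply asserts that, given $x$, one can pick $x_n$ with $d(\phi(x),\phi_n(x_n))\le 2\epsilon$ and $W_p(m_x,(f_n)_\ast m^n_{x_n})\le 2\epsilon$, and then computes $W_p(\phi_\ast m_x,(\phi_n)_\ast m^n_{x_n})\le 4\epsilon$; it does not spell out why the threshold $N_\epsilon$ can be taken uniformly in $x$ (and in $x_n$). Your compactness-and-contradiction argument is precisely what fills that gap, and your closing remark correctly identifies where compactness of $X$ and of the $X^n$ is genuinely needed.
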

\begin{proof}
 Suppose that $(X^n,d^n,\{m_x^n\}_{x\in X^n})\rightarrow (X,d,\{m_x\}_{x\in X})$ in Ollivier's sense 
 and there exists a uniform lower bound of the $p$-coarse Ricci curvature $\kappa_0\in\mathbb{R}$. 
 We fix an arbitrary positive constant $\epsilon>0$. Assume $x_n\in X^n$ converges to $x\in X$ 
 in the sense of the Gromov-Hausdorff topology. 
 Then there exists $N_{\epsilon}>0$ such that 
 $d(\phi(x),\phi_n(x_n))\leq \epsilon$ for any $n\geq N_{\epsilon}$, where $\phi, \phi_n$ are isometric 
 embeddings into a compact metric space. 
 At the same time, there exists $x'_n\in X^n$ such that $d(\phi(x),\phi_n(x'_n))\leq \epsilon$ and 
 $W_p(\phi_{\ast}m_x,(\phi_n)_{\ast}m^n_{x'_n})\leq 2\epsilon$ for any $n\geq N_{\epsilon}$ 
 by the definition of the convergence of metric spaces with random walks. 
 Then we have 
 \begin{align}
  W_p(\phi_{\ast}m_x,(\phi_n)_{\ast}m^n_{x_n})&\leq W_p(\phi_{\ast}m_x,(\phi_n)_{\ast}m^n_{x'_n})
  +W_p((\phi_n)_{\ast}m^n_{x'_n},(\phi_n)_{\ast}m^n_{x_n})\notag\\
  &\leq 2\epsilon+W_p(m^n_{x'_n},m^n_{x_n})\notag\\
  &\leq 2\epsilon+(1-\kappa_0)d(x'_n,x_n)\notag\\
  &=2\epsilon+(1-\kappa_0)d(\phi_n(x'_n),\phi_n(x_n))\notag\\
  &\leq 2\epsilon+(1-\kappa_0)\left(d(\phi_n(x'_n),\phi(x))+d(\phi(x),\phi_n(x_n))\right)\notag\\
  &\leq 2(2-\kappa_0)\epsilon.\notag 
 \end{align}
 Since $\epsilon$ is an arbitrary number, we get $m^n\rightarrow m$ as maps. 
 
 Suppose $m^n\rightarrow m$ as maps. 
 We fix $\epsilon>0$ and take a sufficiently large $n$ such that 
 $f_n\,:\,X^n\rightarrow X$ be an $\epsilon$-approximation map. 
 Let $\phi\,:\,X\rightarrow Z$ and $\phi_n\,:\,X^n\rightarrow Z$ be isometric embeddings 
 into a compact metric space $Z$. 
 It is easy to get $d(\phi_n(q),\phi(f_n(q)))\leq 2\epsilon$ for any $q\in X^n$. 
 By the assumption, 
 we are able to take $x\in X$ and $x_n\in X^n$ such that $d(\phi(x),\phi_n(x_n))\leq 2\epsilon$ and 
 $W_p(m_x,(f_n)_{\ast}m_{x_n})\leq 2\epsilon$. 
 Then 
 \begin{align}
  W_p(\phi_{\ast}m_x,(\phi_n)_{\ast}m^n_{x_n})
  &\leq W_p(\phi_{\ast}m_x,\phi_{\ast}(f_n)_{\ast}m^n_{x_n})
  +W_p(\phi_{\ast}(f_n)_{\ast}m^n_{x_n},(\phi_n)_{\ast}m^n_{x_n})\notag\\
  &=W_p(m_x,(f_n)_{\ast}m^n_{x_n})
  +W_p(\phi_{\ast}(f_n)_{\ast}m^n_{x_n},(\phi_n)_{\ast}m^n_{x_n})\notag
 \end{align}
 holds. We have 
 \begin{align}
  W_p(\phi_{\ast}(f_n)_{\ast}m^n_{x_n},(\phi_n)_{\ast}m^n_{x_n})^p
  =\int_{X_n}d(\phi(f_n(q)),\phi_n(q))^p\,m^n_{x_n}(dq)\leq (2\epsilon)^p.\notag
 \end{align}
 Then we obtain 
 \begin{align}
  W_p(\phi_{\ast}m_x,(\phi_n)_{\ast}m^n_{x_n})\leq 4\epsilon.\notag
 \end{align}
 This completes the proof. 
 \end{proof}
\section{Concentration of measure phenomenon}\label{convres2}
We show Theorem \ref{Levy} in this section. We call $(X,d,\nu)$ a \emph{metric measure space} 
 if $(X,d)$ is a complete separable metric space and $\nu\in\Pro(X)$.
\begin{defn}
 Let $(X,d,\mu)$ be a metric measure space. 
 We define the \emph{partial diameter} of $\mu$ by 
 \begin{align}
  \Diam(\mu, 1-\kappa):=\inf\left\{\Diam(A) ; \mu(A)\geq 1-\kappa,\; A\in\mathcal{B}(\R)\right\}.\label{pd}
 \end{align}
 We also define 
 \begin{align}
  &\Ob(X ; -\kappa):=\sup\left\{\Diam(f_{\ast}\mu, 1-\kappa) ; 
  f : X\rightarrow \R : \text{1-Lipschitz map}\right\}.\notag
 \end{align}
 We define the \emph{observable diameter} of $(X,d,\mu)$ to be 
 \begin{align}
  &\Ob(X):=\inf_{\kappa\in(0,1)}\max\left\{\Ob(X ;-\kappa),\kappa\right\}\label{Ob}
 \end{align}
\end{defn}
\begin{defn}\label{levydef}
 A sequence of metric measure spaces $\{X_n\}_{n\in\mathbb{N}}$ is called a \emph{L{\' e}vy family} if 
 \begin{align}
  \Ob(X_n)\rightarrow 0\quad \text{as}\quad n\rightarrow \infty.\label{levy}
 \end{align}
\end{defn}
\begin{rem}
 There exist various definitions of the L{\'e}vy family (see \cite{G,L}). 
\end{rem}
\begin{proof}[Proof of Theorem \ref{Levy}]
 Let $C$ be a uniform Lipschitz constant of maps $\{m_n\}_n$. 
 It suffices to prove the following two claims 
 \begin{align}
  &\Ob(CX; -\kappa)\leq C\Ob(X; -\kappa)\quad \text{for any $\kappa\in(0,1)$},\label{claim1}\\
  &\Ob(\Pro_1(X))\leq\Ob(CX),\label{claim2}
 \end{align}
 where $CX:=(X,Cd_X,\mu_X)$.  
 Indeed 
 \begin{align}
  \Ob(\Pro_1(X_n))\leq \Ob(CX_n)\leq C\Ob(X_n)\rightarrow 0\notag
 \end{align}
 provided (\ref{claim1}) and (\ref{claim2}) hold. 
 
 For any $\epsilon>0$ there exists a 1-Lipschitz map $f : X\rightarrow \R$ and 
 a Borel set $A\subset\R$ such that 
 \begin{align}
  &f_{\ast}\mu_X(A)\geq 1-\kappa\notag\\
  &\Diam(A)\geq \Ob(X; -\kappa)-\epsilon.\notag
 \end{align}
 We define $B:=(1/c)A=\{a/c ; a\in A\}$. Then we have 
 \begin{align}
  &\left(\frac{1}{c}f\right)_{\ast}\mu_X(B)=\mu_X(A)\notag\\
  &\Diam(B)=\frac{1}{c}\Diam(A).\notag
 \end{align}
 Hence $\Ob(CX; -\kappa)\leq C\Ob(X; -\kappa)$ holds for any $\kappa\in(0,1)$. 
 (\ref{claim1}) is satisfied.   
 Since 
 \begin{align}
 \{\Diam(f_{\ast}(m_{\ast}\mu_X); 1-\kappa); f : \Pro_1(X)\rightarrow\R : 1\text{-Lipschitz}\}\notag\\
 \subset \{\Diam(f_{\ast}\mu_X; 1-\kappa); f : CX\rightarrow\R : 1\text{-Lipschitz}\},\notag
 \end{align} 
 (\ref{claim2}) holds clearly. 
\end{proof}
\section*{Acknowledgement}
The author is grateful to Professor Takashi Shioya for reading this paper and giving useful comments, 
Professor Shin-ich Ohta for fruitful comments for Theorem \ref{lipext} and Ayato Mitsuishi 
for useful advice for Theorem \ref{GHtop}. 
\begin{bibdiv}
\begin{biblist}

\bib{B}{book}{
   author={Billingsley, Patrick},
   title={Convergence of probability measures},
   series={Wiley Series in Probability and Statistics: Probability and
   Statistics},
   edition={2},
   note={A Wiley-Interscience Publication},
   publisher={John Wiley \& Sons Inc.},
   place={New York},
   date={1999},
   pages={x+277},
   doi={10.1002/9780470316962},
}

\bib{Bo}{book}{
   author={Bogachev, V. I.},
   title={Measure theory. Vol. I, II},
   publisher={Springer-Verlag},
   place={Berlin},
   date={2007},
   pages={Vol. I: xviii+500 pp., Vol. II: xiv+575},
   isbn={978-3-540-34513-8},
   isbn={3-540-34513-2},
   review={\MR{2267655 (2008g:28002)}},
   doi={10.1007/978-3-540-34514-5},
}

\bib{BBI}{book}{
   author={Burago, Dmitri},
   author={Burago, Yuri},
   author={Ivanov, Sergei},
   title={A course in metric geometry},
   series={Graduate Studies in Mathematics},
   volume={33},
   publisher={American Mathematical Society},
   place={Providence, RI},
   date={2001},
   pages={xiv+415},
   isbn={0-8218-2129-6},
   review={\MR{1835418 (2002e:53053)}},
}

\bib{C}{article}{
   author={Chodosh, Otis},
   title={A lack of Ricci bounds for the entropic measure on Wasserstein space over the interval},
   journal={J. Funct. Anal},
   volume={262},
   date={2012},
   number={10},
   pages={4570--4581},
}

\bib{FS}{article}{
    author={Funano, Kei},
    author={Shioya, Takashi},
    title={Concentration, Ricci curvature, and eigenvalues of Laplacian},
    journal={preprint},
}

\bib{G}{book}{
   author={Gromov, Misha},
   title={Metric structures for Riemannian and non-Riemannian spaces},
   series={Modern Birkh\"auser Classics},
   edition={Reprint of the 2001 English edition},
   note={Based on the 1981 French original;
   With appendices by M. Katz, P. Pansu and S. Semmes;
   Translated from the French by Sean Michael Bates},
   publisher={Birkh\"auser Boston Inc.},
   place={Boston, MA},
   date={2007},
   pages={xx+585},
   isbn={978-0-8176-4582-3},
   isbn={0-8176-4582-9},
   review={\MR{2307192 (2007k:53049)}},
}

\bib{GM}{article}{
   author={Gromov, M.},
   author={Milman, V. D.},
   title={A topological application of the isoperimetric inequality},
   journal={Amer. J. Math.},
   volume={105},
   date={1983},
   number={4},
   pages={843--854},
   issn={0002-9327},
   review={\MR{708367 (84k:28012)}},
   doi={10.2307/2374298},
}



\bib{L}{book}{
   author={Ledoux, Michel},
   title={The concentration of measure phenomenon},
   series={Mathematical Surveys and Monographs},
   volume={89},
   publisher={American Mathematical Society},
   place={Providence, RI},
   date={2001},
   pages={x+181},
   isbn={0-8218-2864-9},
   review={\MR{1849347 (2003k:28019)}},
}

\bib{LV}{article}{
   author={Lott, John},
   author={Villani, C{\'e}dric},
   title={Ricci curvature for metric-measure spaces via optimal transport},
   journal={Ann. of Math. (2)},
   volume={169},
   date={2009},
   number={3},
   pages={903--991},
   issn={0003-486X},
   review={\MR{2480619 (2010i:53068)}},
   doi={10.4007/annals.2009.169.903},
}

\bib{O}{article}{
   author={Ollivier, Yann},
   title={Ricci curvature of Markov chains on metric spaces},
   journal={J. Funct. Anal.},
   volume={256},
   date={2009},
   number={3},
   pages={810--864},
   issn={0022-1236},
   review={\MR{2484937 (2010j:58081)}},
   doi={10.1016/j.jfa.2008.11.001},
}

\bib{P}{book}{
   author={Petersen, Peter},
   title={Riemannian geometry},
   series={Graduate Texts in Mathematics},
   volume={171},
   edition={2},
   publisher={Springer},
   place={New York},
   date={2006},
   pages={xvi+401},
   isbn={978-0387-29246-5},
   isbn={0-387-29246-2},
   review={\MR{2243772 (2007a:53001)}},
}

\bib{VS}{article}{
   author={von Renesse, Max-K.},
   author={Sturm, Karl-Theodor},
   title={Transport inequalities, gradient estimates, entropy, and Ricci
   curvature},
   journal={Comm. Pure Appl. Math.},
   volume={58},
   date={2005},
   number={7},
   pages={923--940},
   issn={0010-3640},
   review={\MR{2142879 (2006j:53048)}},
   doi={10.1002/cpa.20060},
}

\bib{St1}{article}{
   author={Sturm, Karl-Theodor},
   title={On the geometry of metric measure spaces. I},
   journal={Acta Math.},
   volume={196},
   date={2006},
   number={1},
   pages={65--131},
   issn={0001-5962},
   review={\MR{2237206 (2007k:53051a)}},
   doi={10.1007/s11511-006-0002-8},
}

\bib{St2}{article}{
   author={Sturm, Karl-Theodor},
   title={On the geometry of metric measure spaces. II},
   journal={Acta Math.},
   volume={196},
   date={2006},
   number={1},
   pages={133--177},
   issn={0001-5962},
   review={\MR{2237207 (2007k:53051b)}},
   doi={10.1007/s11511-006-0003-7},
}

\bib{V}{book}{
   author={Villani, C{\'e}dric},
   title={Topics in optimal transportation},
   series={Graduate Studies in Mathematics},
   volume={58},
   publisher={American Mathematical Society},
   place={Providence, RI},
   date={2003},
   pages={xvi+370},
   isbn={0-8218-3312-X},
   review={\MR{1964483 (2004e:90003)}},
   doi={10.1007/b12016},
}

\bib{V2}{book}{
   author={Villani, C{\'e}dric},
   title={Optimal transport},
   series={Grundlehren der Mathematischen Wissenschaften [Fundamental
   Principles of Mathematical Sciences]},
   volume={338},
   note={Old and new},
   publisher={Springer-Verlag},
   place={Berlin},
   date={2009},
   pages={xxii+973},
   isbn={978-3-540-71049-3},
   review={\MR{2459454 (2010f:49001)}},
   doi={10.1007/978-3-540-71050-9},
}

\end{biblist}
\end{bibdiv}
\end{document}